\journal{Performance Evaluation}
\definecolor{dbrown}{RGB}{102,51,0}
\definecolor{dblue}{RGB}{80, 100, 150}
\definecolor{dgreen}{RGB}{50,205,50}
\theoremstyle{plain}
\newtheorem{theorem}{Theorem}[section]
\newtheorem{lemma}[theorem]{Lemma}
\theoremstyle{definition}
\newtheorem{problem}{Problem}[section]
\newtheorem{example}{Example}[section]
\newtheorem{definition}{Definition}[section]
\theoremstyle{remark}
\newcommand{\ie}{{\it i.e.},\ }
\newcommand{\FF}{\mathcal{F}}
\newcommand{\parti}{C}
\newcommand{\parind}{c}
\newcommand{\finepar}{Z}
\newcommand{\fineparind}{z}
\newcommand{\minpar}{W}
\newcommand{\flowc}{\phi}
\begin{document}

\begin{frontmatter}
	


\title{A linear programming approach to Markov reward error bounds for queueing networks}

\author{Xinwei Bai, Jasper Goseling}

\address{Stochastic Operations Research, Faculty of Electrical Engineering, Mathematics and Computer Science, University of Twente, P.O. Box 217, 7500 AE Enschede, The Netherlands}

%
%

\begin{abstract}
In this paper, we present a numerical framework for constructing bounds on stationary performance measures of random walks in the positive orthant using the Markov reward approach. These bounds are established in terms of stationary performance measures of a perturbed random walk whose stationary distribution is known explicitly. We consider random walks in an arbitrary number of dimensions and with a transition probability structure that is defined on an arbitrary partition of the positive orthant. Within each component of this partition the transition probabilities are homogeneous. This enables us to model queueing networks with, for instance, break-downs and finite buffers. The main contribution of this paper is that we generalize the linear programming approach of~\cite{goseling2016linear} to this class of models.
\end{abstract}

\begin{keyword}

multi-dimensional random walks \sep stationary performance measures \sep error bound \sep Markov reward approach \sep linear programming

	
\end{keyword} 

\end{frontmatter}

%
%

\section{Introduction}
\label{sec:introduction}

We present a framework for establishing bounds on stationary performance measures of a class of discrete-time random walks in the $M$-dimensional positive orthant, i.e., with state space $S=\left\{ 0,1,\dots \right\}^M$. This class of random walks enables us to model queueing networks with nodes of finite or infinite capacity, and with transition rates that depend on the number of jobs in the nodes. The latter allows us to consider, for instance, queues with break-downs or networks with overflow. The stationary performance measures that can be considered in our framework include the average number of jobs in a queue, the throughput and blocking probabilities. 

More precisely, for a random walk $R$ we assume that a unique stationary probability distribution $\pi: S \to [0,1]$ for which the balance equations hold exists, \ie there exists $\pi$ that satisfies
\begin{align}
\label{eq:balance_equation}
\pi(n)\sum_{n^\prime \in S} P(n,n^\prime) = \sum_{n^\prime \in S} \pi(n^\prime)P(n^\prime,n), \qquad \forall n\in S,
\end{align}
where $P(n,n^\prime)$ denotes the transition probability from $n$ to $n^\prime$. 
For a non-negative function $F: S\to [0,\infty)$, we are interested in the stationary performance measure,
\begin{align}
\label{eq:definition_FF}
\FF = \sum_{n = (n_1,\dots,n_M)\in S} \pi(n)F(n).
\end{align}
For example, if $F(n) = n_1$, then $\FF$ represents the average number of jobs in the first node. 

If $\pi$ is known explicitly, $\FF$ can be derived directly. However, in general it is difficult to obtain an explicit expression for the stationary probability distribution of a random walk. In this paper, we do not focus on obtaining the stationary probability distribution. Instead, our interest is in providing a general numerical framework to obtain upper and lower bounds on $\FF$ for general random walks. In line with this goal, we do not establish existence of $\FF$ a priori. Instead we will see that
if our method successfully finds an upper and lower bound, then $\FF$ exists.

Consider a perturbed random walk $\bar{R}$, of which the stationary probability distribution $\bar{\pi}$ is known explicitly. 
Moreover, we consider an $\bar{F}: S \to [0,\infty)$ for $\bar{R}$, which can be different from $F$. The bounds on $\FF$ are established in terms of
\begin{align}
\label{eq:definition_Fbar}
\bar{\FF} = \sum_{n\in S}\bar{\pi}(n)\bar{F}(n). 
\end{align}
We use the Markov reward approach, as introduced in~\cite{vandijk1988simple}, to build up these bounds. The method has been applied to various queueing networks in~\cite{vandijk1988perturbation, vandijk1998bounds, vandijk88tandem, van2004error} and an overview of this approach has been given in~\cite{vandijk11inbook}. In the works mentioned above, error bounds have been manually verified for each specific model. The verification can be quite complicated. Thus, a linear programming approach has been presented in~\cite{goseling2016linear} that provides bounds on $\FF$ for random walks in the quarter plane ($M=2$). In particular, in~\cite{goseling2016linear} the quarter plane is partitioned into four components, namely the interior, the horizontal axis, the vertical axis and the origin. Homogeneous random walks with respect to this partition, \ie transition probabilities are the same everywhere within a component, are considered there. 

In this paper, we extend the linear programming approach in~\cite{goseling2016linear}. The contribution of this paper is two-fold. First, we build up a numerical program that can be applied to general models. In~\cite{goseling2016linear}, an $R$ in the quarter plane with a specific partition is considered. The numerical program used in~\cite{goseling2016linear} cannot be easily implemented for general partitions or multi-dimensional cases. In this paper, we are able to consider an $R$ in an arbitrary dimensional state space. Moreover, we allow for general transition probability structures. For example, we can consider models such as a two-node queue with one finite buffer and one infinite buffer. We can also consider models in which the transition probabilities are dependent on the number of jobs in a queue. Secondly, in the linear programming approach in~\cite{goseling2016linear}, one important step is that we first assign values to a set of variables using their interpretation such that all the constraints hold. Next we see these variables as parameters in the problem. In this paper we formulate a linear program to obtain values for this set of variables while in~\cite{goseling2016linear} the values are manually chosen and then verified. We show that this linear program is always feasible. 

%

The problem of obtaining the stationary probability distribution has been considered in various works. For instance, methods have been developed to find $\pi$ through its probability generating function in~\cite{cohen1983boundary, fayolle1999random, resing2003tandem}. It is shown that for random walks in the quarter plane a boundary value problem can be formulated for the probability generating function. However, the boundary value problem has an explicit solution only in special cases (for example in~\cite{resing2003tandem}). If the probability generating function is obtained, the algorithm developed in~\cite{abate1992numerical} can provide a numerical inversion of the probability generating function. In addition, the matrix geometric method has been discussed in~\cite{latouche1999introduction, neuts1981matrix} for Quasi-birth-and-death (QBD) processes with finite phases, which provides an algorithmic approach to obtain the stationary probability distribution numerically. Perturbation analysis has been considered in, for example,~\cite{altman2004perturbation, heidergott2010series, heidergott2007series}, where $\pi$ is expressed in terms of the explicitly known $\bar{\pi}$. In the works mentioned above, only random walks in the two-dimensional orthant have been considered. As is mentioned above, one of our main contribution is to be able to establish performance bounds for random walks in the multi-dimensional positive orthant.

Furthermore, heavy-traffic queues have been studied in~\cite{halfin1981heavy, harrison1978diffusion, kingman1962queues, iglehart1965limiting, whitt1982heavy}, where various heavy-traffic regimes are considered and the limiting processes are given. Tail asymptotics of the stationary distribution have been considered in~\cite{miyazawa2011light}, where the existing approaches for deriving the tail asymptotics have been discussed. The tail asymptotics for specific models have also been studied in, for example,~\cite{adan2009exact, borovkov2001large, kobayashi2014tail, lieshout2008asymptotic, zwart2004exact}. Tail asymptotics of two-dimensional semi-martingale reflecting Brownian motions (SRBM) have been studied in~\cite{dai2011reflecting, dai2013stationary, harrison2009reflected}. 

The remainder of this paper is structured as follows. In Section~\ref{sec:introduction_model_description}, we define the model and notation. Then, in Section~\ref{sec:introduction_Markov_reward_approach} we review the results of the Markov reward approach. In Section~\ref{sec:extension_lp_model_problem}, we formulate optimization problems for the upper and lower bounds, which are non-convex and have countably infinite number of variables and constraints. Next, in Section~\ref{sec:extension_lp_lp_error_bound} we apply the linear programming approach and establish linear programs for the bounds.
Finally, in Section~\ref{sec:extension_lp_numerical_experiment}, we present some numerical examples.

\section{Model and notation}
\label{sec:introduction_model_description}

Let $R$ be a discrete-time random walk in $S = \left\{ 0,1,\dots \right\}^M$. 
Moreover, let $P: S \times S \to [0,1]$ be the transition probability matrix of $R$. In this paper, only transitions between the nearest neighbors are allowed, \ie $P(n,n^\prime) > 0$ only if $u\in N(n)$, where $N(n)$ denotes the set of possible transitions from $n$, \ie
\begin{align}
N(n) = \left\{ u \in \{-1,0,1\}^M \mid n+u \in S \right\}. 
\end{align}
For a finite index set $K$, we define a partition of $S$ as follows.

\begin{definition}
\label{def:partition}
$C = \left\{ C_k \right\}_{k\in K}$ is called a partition of $S$ if
\begin{enumerate}
\item $S = \cup_{k\in K} C_k$.
\item For all $j,k\in K$ and $j \neq k$, $C_j \cap C_k = \emptyset$. 
\item For any $k\in K$, $N(n) = N(n^\prime)$, $ \forall n,n^\prime \in C_k$.
\end{enumerate}
\end{definition}

The third condition, which is non-standard for a partition, ensures that all the states in a component have the same set of possible transitions. With this condition, we are able to define homogeneous transition probabilities within a component, meaning that the transition probabilities are the same everywhere in a component. Denote by $c(n)$ the index of the component of partition $C$ that $n$ is located in. We call $c:S\to K$ the index indicating function of partition $\parti$. Throughout the paper, various partitions will be used. We will use capital letters to denote partitions and the corresponding small letters to denote their component index indicating functions. 

We restrict our attention to an $R$ that is homogeneous with respect to a partition $C$ of the state space, \ie $P(n,n+u)$ depends on $n$ only through the component index $c(n)$. Therefore, we denote by $N_{c(n)}$ and $p_{c(n),u}$ the set of possible transitions from $n$ and transition probability $P(n,n+u)$, respectively. To illustrate the notation, we present the following example. 

\begin{example}
\label{ex:introduction_coarse_partition}
Consider $S = \{ 0,1,\dots \}^2$. Suppose that $C$ consists of
\begin{align*}
C_1 &= \left\{ 0 \right\} \times \left\{ 0 \right\}, \quad C_2 = \left\{ 1,2,3,4 \right\} \times \left\{ 0 \right\}, \quad C_3 = \left\{ 5,6,\dots \right\} \times \left\{ 0 \right\}, \\
C_4 &= \left\{ 0 \right\} \times \left\{ 1,2,\dots \right\}, C_5 = \left\{ 1,2,3,4 \right\} \times \left\{ 1,2,\dots \right\}, \\
C_6 &= \left\{ 5,6,\dots \right\} \times \left\{ 1,2,\dots \right\}. 
\end{align*}
The components and their sets of possible transitions are shown in Figure~\ref{fig:introduction_coarse_partition}. 

\begin{figure}[htb!]
\centering
\begin{tikzpicture}[scale = 1]
\foreach \i in {1,...,7}
\foreach \j in {1,...,5}
\filldraw [gray] (\i, \j) circle (2pt);

\foreach \i in {1,...,7}
\filldraw [gray] (\i,0) circle (2pt);

\foreach \j in {1,...,5}
\filldraw [gray] (0, \j) circle (2pt);

\filldraw [gray] (0, 0) circle (2pt);

\draw [->, >=stealth', gray] (0,0) -- (0,6) node[left, black, thick] {$n_2$};
\draw [->, >=stealth', gray] (0,0) -- (8,0) node[below, black, thick] {$n_1$};

\draw [rounded corners] (-0.4,-0.4) rectangle (0.4,0.4) node at (0.2,-0.2) {$C_1$};
		
\draw [rounded corners] (0.6,-0.4) rectangle (4.4,0.4) node at (2.5,-0.2) {$C_2$};
	
\draw [rounded corners] (7.4,-0.4) -- (4.6,-0.4) -- (4.6,0.4) -- (7.4,0.4) node at (5.5,-0.2) {$C_3$};
		
\draw [rounded corners] (-0.4,5.4) -- (-0.4,0.6) -- (0.4,0.6) -- (0.4,5.4) node at (-0.2,2.5) {$C_4$};

\draw [rounded corners] (0.6,5.4) -- (0.6,0.6) -- (4.4,0.6) -- (4.4,5.4) node at (3,2.5) {$C_5$};

\draw [rounded corners] (7.4,0.6) -- (4.6,0.6) -- (4.6,5.4) node at (7,2.5) {$C_6$};

\draw [->, very thick] (0,0) -- ++(0.8,0);
\draw [->, very thick] (0,0) -- ++(0,0.8);
\draw [->, very thick] (0,0) -- ++(0.8,0.8);
\draw node at (-0.4,0.3) {$p_{1,u}$};
	
\draw [->, very thick] (2,0) -- ++(0.8,0);
\draw [->, very thick] (2,0) -- ++(-0.8,0);
\draw [->, very thick] (2,0) -- ++(0,0.8);
\draw [->, very thick] (2,0) -- ++(0.8,0.8);
\draw [->, very thick] (2,0) -- ++(-0.8,0.8);
\draw node at (3,0.3) {$p_{2,u}$};

\draw [->, very thick] (6,0) -- ++(0.8,0);
\draw [->, very thick] (6,0) -- ++(-0.8,0);
\draw [->, very thick] (6,0) -- ++(0,0.8);
\draw [->, very thick] (6,0) -- ++(0.8,0.8);
\draw [->, very thick] (6,0) -- ++(-0.8,0.8);
\draw node at (7,0.3) {$p_{3,u}$};

\draw [->, very thick] (0,3) -- ++(0.8,0);
\draw [->, very thick] (0,3) -- ++(0,0.8);
\draw [->, very thick] (0,3) -- ++(0,-0.8);
\draw [->, very thick] (0,3) -- ++(0.8,0.8);
\draw [->, very thick] (0,3) -- ++(0.8,-0.8);
\draw node at (-0.5,3) {$p_{4,u}$};

\draw [->, very thick] (2,3) -- ++(0.8,0);
\draw [->, very thick] (2,3) -- ++(-0.8,0);
\draw [->, very thick] (2,3) -- ++(0,0.8);
\draw [->, very thick] (2,3) -- ++(0,-0.8);
\draw [->, very thick] (2,3) -- ++(0.8,0.8);
\draw [->, very thick] (2,3) -- ++(0.8,-0.8);
\draw [->, very thick] (2,3) -- ++(-0.8,0.8);
\draw [->, very thick] (2,3) -- ++(-0.8,-0.8);
\draw node at (2.5,4) {$p_{5,u}$};

\draw [->, very thick] (6,3) -- ++(0.8,0);
\draw [->, very thick] (6,3) -- ++(-0.8,0);
\draw [->, very thick] (6,3) -- ++(0,0.8);
\draw [->, very thick] (6,3) -- ++(0,-0.8);
\draw [->, very thick] (6,3) -- ++(0.8,0.8);
\draw [->, very thick] (6,3) -- ++(0.8,-0.8);
\draw [->, very thick] (6,3) -- ++(-0.8,0.8);
\draw [->, very thick] (6,3) -- ++(-0.8,-0.8);
\draw node at (6.5,4) {$p_{6,u}$};

%
%
\end{tikzpicture}
\caption{A finite partition of $S=\left\{ 0,1,\dots \right\}^2$ and the sets of possible transitions for its components.}
\label{fig:introduction_coarse_partition}
\end{figure}
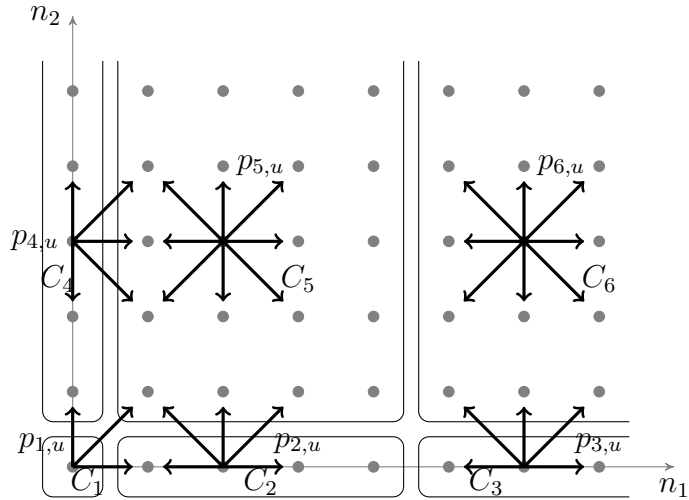
\end{example}

Based on a partition, we now define a component-wise linear function.

\begin{definition}
\label{def:c_linear_function}
Let $C$ be a partition of $S$. A function $H: S \to [0,\infty)$ is called $C$-linear if 
\begin{align}
H(n) = \sum_{k\in K} \mathbf{1}\left( n\in C_k \right)\left( h_{k,0} + \sum_{i=1}^M h_{k,i}n_i \right).
\end{align}
\end{definition}

In this paper, we often consider transformations of $H$ of the form $G(n) = H(n+u)$, $u\in N(n)$. It will be of interest to consider a partition $\finepar$ of $S$ such that $G$ is $\finepar$-linear when $H$ is $\parti$-linear. 

\begin{definition}
\label{def:fine_partition}
Given a finite partition $C$, $\finepar = \left\{ \finepar_j \right\}_{j\in J}$ is called a refinement of $C$ if
\begin{enumerate}
\item $\finepar$ is a finite partition of $S$.
\item For any $j\in J$, any $n\in \finepar_j$ and any $u\in N_j$, $c(n+u)$ depends only on $j$ and $u$, \ie
\begin{align}
c(n+u) = c(n^\prime+u), \quad \forall n,n^\prime \in \finepar_j. 
\end{align}
\end{enumerate}
\end{definition} 

Remark that a refinement of $C$ is not unique. To give more intuition, in the following example we give a refinement of $C$ that is given in Example~\ref{ex:introduction_coarse_partition}.

\begin{example}
\label{ex:introduction_fine_partition}	
In this example, consider the partition $C$ given in Example~\ref{ex:introduction_coarse_partition}. A refinement of $C$ is shown in Figure~\ref{fig:introduction_example_2d_fine}. 
	
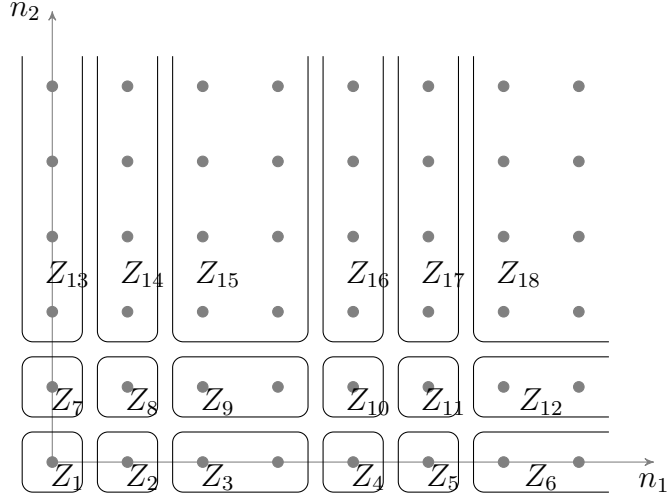
\begin{figure}[h!]
\centering
\begin{tikzpicture}[scale = 1]
		
\foreach \i in {1,...,7}
\foreach \j in {1,...,5}
\filldraw [gray] (\i, \j) circle (2pt);

\foreach \i in {1,...,7}
\filldraw [gray] (\i,0) circle (2pt);
	
\foreach \j in {1,...,5}
\filldraw [gray] (0, \j) circle (2pt);
	
\filldraw [gray] (0, 0) circle (2pt);

\draw [->, >=stealth', gray] (0,0) -- (0,6) node[left, black, thick] {$n_2$};
\draw [->, >=stealth', gray] (0,0) -- (8,0) node[below, black, thick] {$n_1$};
		
\draw [rounded corners] (-0.4,-0.4) rectangle (0.4,0.4) node at (0.2,-0.2) {$\finepar_1$};

\draw [rounded corners] (0.6,-0.4) rectangle (1.4,0.4) node at (1.2,-0.2) {$\finepar_2$};

\draw [rounded corners] (1.6,-0.4) rectangle (3.4,0.4) node at (2.2,-0.2) {$\finepar_3$};

\draw [rounded corners] (3.6,-0.4) rectangle (4.4,0.4) node at (4.2,-0.2) {$\finepar_4$};

\draw [rounded corners] (4.6,-0.4) rectangle (5.4,0.4) node at (5.2,-0.2) {$\finepar_5$};

\draw [rounded corners] (7.4,-0.4) -- (5.6,-0.4) -- (5.6,0.4) -- (7.4,0.4) node at (6.5,-0.2) {$\finepar_6$};
		
\draw [rounded corners] (-0.4,0.6) rectangle (0.4,1.4) node at (0.2,0.8) {$\finepar_7$};
		
\draw [rounded corners] (0.6,0.6) rectangle (1.4,1.4) node at (1.2,0.8) {$\finepar_8$};
	
\draw [rounded corners] (1.6,0.6) rectangle (3.4,1.4) node at (2.2,0.8) {$\finepar_9$};

\draw [rounded corners] (3.6,0.6) rectangle (4.4,1.4) node at (4.2,0.8) {$\finepar_{10}$};

\draw [rounded corners] (4.6,0.6) rectangle (5.4,1.4) node at (5.2,0.8) {$\finepar_{11}$};

\draw [rounded corners] (7.4,0.6) -- (5.6,0.6) -- (5.6,1.4) -- (7.4,1.4) node at (6.5,0.8) {$\finepar_{12}$};
		
\draw [rounded corners] (-0.4,5.4) -- (-0.4,1.6) -- (0.4,1.6) -- (0.4,5.4) node at (0.2,2.5) {$\finepar_{13}$};
		
\draw [rounded corners] (0.6,5.4) -- (0.6,1.6) -- (1.4,1.6) -- (1.4,5.4) node at (1.2,2.5) {$\finepar_{14}$};
		
\draw [rounded corners] (1.6,5.4) -- (1.6,1.6) -- (3.4,1.6) -- (3.4,5.4) node at (2.2,2.5) {$\finepar_{15}$};

\draw [rounded corners] (3.6,5.4) -- (3.6,1.6) -- (4.4,1.6) -- (4.4,5.4) node at (4.2,2.5) { $\finepar_{16}$};
		
\draw [rounded corners] (4.6,5.4) -- (4.6,1.6) -- (5.4,1.6) -- (5.4,5.4) node at (5.2,2.5) {$\finepar_{17}$};
		
\draw [rounded corners] (5.6,5.4) -- (5.6,1.6) -- (7.4,1.6) node at (6.2,2.5) {$\finepar_{18}$};
\end{tikzpicture}
\caption{A refinement of $C$ that is in Example~\ref{ex:introduction_coarse_partition}.}
\label{fig:introduction_example_2d_fine}
\end{figure}
\end{example}

Since $R$ is homogeneous with respect to partition $C$, it is homogeneous with respect to partition $Z$ as well. Next, we present the result that $H(n+u)$ is $\finepar$-linear if $H$ is $C$-linear. The proof of the lemma is straightforward and is hence omitted. 

\begin{lemma}
\label{lem:introduction_component_linear_relation}
Let $H:S \to [0,\infty)$ be a $C$-linear function. Moreover, let $\finepar$ be a refinement of $C$. For any $u \in \left\{ -1,0,1 \right\}^M$, define $G: S\to [0,\infty)$ as $G(n) = \mathbf{1}(n+u\in S)H(n+u)$. Then, $G$ is $\finepar$-linear. 
\end{lemma}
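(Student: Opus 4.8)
The plan is to verify the $\finepar$-linear representation of $G$ one component at a time. I would fix an arbitrary index $j\in J$ and exhibit coefficients $g_{j,0},g_{j,1},\dots,g_{j,M}$, depending only on $j$ and on the fixed shift $u$, such that $G(n)=g_{j,0}+\sum_{i=1}^M g_{j,i}n_i$ for every $n\in \finepar_j$. Since $\left\{\finepar_j\right\}_{j\in J}$ is a partition of $S$, assembling these per-component affine expressions with indicator functions produces exactly the form demanded by Definition~\ref{def:c_linear_function}. Non-negativity of $G$, needed for the codomain $[0,\infty)$, is immediate since $G=\mathbf{1}(\cdot+u\in S)\,H(\cdot+u)\ge 0$.

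The first step is a case split according to whether $u$ is an admissible transition from $\finepar_j$. Because $\finepar$ is a partition, condition~3 of Definition~\ref{def:partition} gives $N(n)=N_j$ for all $n\in\finepar_j$, and by definition $N(n)=\left\{u:n+u\in S\right\}$. Hence if $u\notin N_j$, then $n+u\notin S$ for every $n\in\finepar_j$, so $G(n)=0$ on $\finepar_j$ and the claim holds trivially with all $g_{j,i}=0$. Note that this case uses only the partition structure, not the refinement hypothesis.

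In the remaining case $u\in N_j$, every $n\in\finepar_j$ satisfies $n+u\in S$, so $\mathbf{1}(n+u\in S)=1$ and $G(n)=H(n+u)$. This is where the refinement property is used: by condition~2 of Definition~\ref{def:fine_partition}, the index $c(n+u)$ is constant as $n$ ranges over $\finepar_j$, so I may write $c(n+u)=k$ for a single $k=k(j,u)$. Substituting into the $\parti$-linear expression for $H$ then gives, for all $n\in\finepar_j$,
\begin{align*}
G(n)=h_{k,0}+\sum_{i=1}^M h_{k,i}(n_i+u_i)=\Big(h_{k,0}+\sum_{i=1}^M h_{k,i}u_i\Big)+\sum_{i=1}^M h_{k,i}n_i,
\end{align*}
and I set $g_{j,0}=h_{k,0}+\sum_{i=1}^M h_{k,i}u_i$ and $g_{j,i}=h_{k,i}$ for $i=1,\dots,M$.

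The only genuine subtlety, and the sole place the refinement hypothesis is actually needed, is the constancy of $c(n+u)$ on each $\finepar_j$: without it the coefficient index $k$ could vary within a component, and $G$ would fail to be affine there. Everything else is bookkeeping, namely collecting the two cases over all $j\in J$ into the single indicator sum of Definition~\ref{def:c_linear_function}. This is why the authors call the proof straightforward; I would expect it to be short, with the refinement definition carrying the entire weight of the argument.
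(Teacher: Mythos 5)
Your proof is correct, and it is precisely the ``straightforward'' argument the authors allude to when they omit the proof: a case split on whether $u \in N_j$ (where the common neighborhood $N_j$ on each $\finepar_j$ comes from condition~3 of Definition~\ref{def:partition} applied to $\finepar$), with the trivial zero case when $u \notin N_j$, and with condition~2 of Definition~\ref{def:fine_partition} supplying the constancy of $c(n+u)$ on $\finepar_j$ that makes $G$ affine there. Since the paper gives no proof to compare against, there is nothing further to reconcile; your write-up correctly identifies the refinement condition as the only non-trivial ingredient.
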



\section{Preliminaries: Markov reward approach}
\label{sec:introduction_Markov_reward_approach}

Suppose that we have obtained an $\bar{R}$ for which $\bar{\pi}$ is known explicitly. Then, we build up upper and lower bounds on $\FF$ using the Markov reward approach, an introduction to which is given in~\cite{vandijk11inbook}. In this section, we give a review of this approach including its main result. 

In the Markov reward approach, $F(n)$ is considered as a reward if $R$ stays in $n$ for one time step. Let $F^t(n)$ be the expected cumulative reward up to time $t$ if $R$ starts from $n$ at time $0$, \ie
\begin{align}
\label{eq:definition_cumulative_reward}
F^t(n) = \sum_{k=0}^{t-1}\sum_{m\in S} P^k(n,m)F(m),
\end{align}
where $P^k(n,m)$ is the $k$-step transition probability from $n$ to $m$. Then, since $R$ is ergodic and $\FF$ exists, for any $n\in S$, 
\begin{align}
\label{eq:average_gain}
\lim_{t\to \infty} \frac{F^t(n)}{t} = \FF,
\end{align}
\ie $\FF$ is the average reward gained by the random walk independent of the starting state. Moreover, based on the definition of $F^t$, it can be verified that the following recursive equation holds, 
\begin{align}
\nonumber
F^0(n) =&\ 0, \\
\label{eq:cumulative_reward}
F^{t+1}(n) =&\ F(n) + \sum_{n^\prime\in S} P(n,n^\prime)F^t(n^\prime).
\end{align}
Next, we define the bias terms as follows. 

\begin{definition}
\label{def:bias_terms}
For any $t=0,1,\dots$, the bias terms $D^t: S\times S \to \mathbb{R}$, are defined as
\begin{align}
D^t(n,n^\prime) = F^t(n^\prime) - F^t(n).
\end{align}
\end{definition}

We present the main result of the Markov reward approach below.

\begin{theorem}[Result 9.3.5 in~\cite{vandijk11inbook}]
\label{thm:Markov_reward_approach_result}
Suppose that $\bar{F}: S \to [0,\infty)$ and $G: S \to [0,\infty)$ satisfy
\begin{align}
\label{eq:error_bound_condition}
\left| \bar{F}(n) - F(n) + \sum_{n^\prime\in S} \left( \bar{P}(n,n^\prime)-P(n,n^\prime) \right) D^t(n,n^\prime) \right| \le G(n),
\end{align}
for all $n\in S$, $t \ge 0$. Then 
\begin{align*}
\left| \bar{\FF}-\FF \right| \le \sum_{n\in S}\bar{\pi}(n)G(n). 
\end{align*}
\end{theorem}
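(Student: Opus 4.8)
The plan is to integrate the bracketed expression in~\eqref{eq:error_bound_condition} against the stationary distribution $\bar\pi$ and to show that, after averaging over $t$, the resulting quantity collapses to exactly $\bar\FF-\FF$ while staying bounded in absolute value by $\sum_{n\in S}\bar\pi(n)G(n)$. Concretely, for each $t\ge 0$ I would set
\begin{align*}
A_t = \sum_{n\in S}\bar\pi(n)\left[ \bar F(n) - F(n) + \sum_{n^\prime\in S}\left( \bar P(n,n^\prime) - P(n,n^\prime) \right)D^t(n,n^\prime) \right].
\end{align*}
By the triangle inequality and the hypothesis~\eqref{eq:error_bound_condition}, one has $|A_t|\le \sum_{n\in S}\bar\pi(n)G(n)$ for every $t$, so it suffices to identify the limit of a suitable average of $A_t$ with $\bar\FF-\FF$.

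The first substantive step is a purely algebraic simplification of $A_t$. Writing $D^t(n,n^\prime)=F^t(n^\prime)-F^t(n)$ and splitting off the $\bar P$ and $P$ contributions, the $\bar P$ part vanishes: using $\sum_{n^\prime}\bar P(n,n^\prime)=1$ together with the stationarity $\sum_{n}\bar\pi(n)\bar P(n,n^\prime)=\bar\pi(n^\prime)$ (and Tonelli to interchange the nonnegative summations) gives $\sum_n\bar\pi(n)\sum_{n^\prime}\bar P(n,n^\prime)\bigl(F^t(n^\prime)-F^t(n)\bigr)=0$. For the $P$ part, the reward recursion~\eqref{eq:cumulative_reward} yields $\sum_{n^\prime}P(n,n^\prime)F^t(n^\prime)=F^{t+1}(n)-F(n)$, and after collecting terms the two contributions involving $F(n)$ cancel, leaving
\begin{align*}
A_t = \bar\FF - \sum_{n\in S}\bar\pi(n)\left( F^{t+1}(n) - F^t(n) \right).
\end{align*}

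The remaining step is to pass to the limit, and here the Ces\`aro average is the clean device. Averaging over $t=0,\dots,T-1$, the increments telescope and $F^0\equiv 0$, so that
\begin{align*}
\frac{1}{T}\sum_{t=0}^{T-1} A_t = \bar\FF - \sum_{n\in S}\bar\pi(n)\frac{F^T(n)-F^0(n)}{T} = \bar\FF - \sum_{n\in S}\bar\pi(n)\frac{F^T(n)}{T}.
\end{align*}
Since $R$ is ergodic and $\FF$ exists, $F^T(n)/T\to\FF$ for each $n$ by~\eqref{eq:average_gain}, whence $\sum_{n}\bar\pi(n)F^T(n)/T\to\FF$ and therefore $\frac{1}{T}\sum_{t=0}^{T-1}A_t\to\bar\FF-\FF$. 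Combining this with the uniform bound $|A_t|\le\sum_n\bar\pi(n)G(n)$ gives $\bigl|\bar\FF-\FF\bigr|\le\sum_{n\in S}\bar\pi(n)G(n)$, as claimed.

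The step I expect to be the main obstacle is the analytic justification of the limit/sum interchange, rather than the algebra: the cancellation of the $\bar P$ term relies on reordering a double sum (which is safe by Tonelli once $F^t\ge 0$ is used), but the passage $\lim_{T\to\infty}\sum_{n\in S}\bar\pi(n)F^T(n)/T=\FF$ needs more than pointwise convergence of $F^T(n)/T$. A rigorous treatment would require a domination or uniform-integrability argument, for instance bounding $F^T(n)/T$ uniformly in $T$ by a $\bar\pi$-integrable function, or invoking the ergodicity and moment conditions under which the Markov reward approach of~\cite{vandijk11inbook} is formulated, so as to apply a dominated-convergence argument.
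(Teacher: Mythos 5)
Your proof is correct in substance, and it organizes the argument in a genuinely different way from the paper. The paper's proof works with the cumulative rewards of \emph{both} chains: it unrolls the recursion for $\bar{F}^t - F^t$ into $\sum_{k=0}^{t-1}\left[ \bar{P}^k(\bar{F}-F) + \bar{P}^k(\bar{P}-P)F^{t-k-1} \right]$, collapses the powers of $\bar{P}$ using $\bar{\pi}\bar{P}^k = \bar{\pi}$, bounds each of the $t$ resulting summands by $\bar{\pi}\cdot G$ via the hypothesis, and then divides by $t$, invoking the ergodic limits $F^t(n)/t \to \FF$ \emph{and} $\bar{F}^t(n)/t \to \bar{\FF}$ for both chains. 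You instead integrate the hypothesis directly against $\bar{\pi}$, cancel the $\bar{P}$-part by stationarity, turn the $P$-part into the increment $F^{t+1}-F^t$ via the reward recursion, and telescope a Ces\`aro average. The two computations are formally equivalent: since the row sums of $\bar{P}-P$ vanish, your $A_t$ equals $\bar{\pi}\cdot(\bar{F}-F) + \bar{\pi}\cdot\left[ (\bar{P}-P)F^t \right]$, so $\sum_{t=0}^{T-1} A_t$ is exactly the paper's $\bar{\pi}\cdot(\bar{F}^T - F^T)$. What your route buys is that the perturbed cumulative reward $\bar{F}^t$ never appears: you use $\bar{\FF} = \bar{\pi}\cdot\bar{F}$ by definition, so the ergodic limit is needed only for the original chain $R$. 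This is a real advantage here, because the paper extends the theorem to chains with transient states, and the claim $\bar{F}^t(n)/t \to \bar{\FF}$ for such a perturbed chain is precisely the kind of statement the appendix has to justify separately. The analytic caveat you flag (interchanging $\lim_{T\to\infty}$ with the infinite sum $\sum_n \bar{\pi}(n)$) is genuine but is not a gap relative to the paper: the paper's final step has the identical issue and passes over it silently, inheriting the requisite regularity from van Dijk's formulation. The same is true of the finer point that splitting the double sums (your cancellation of the $\bar{P}$-part, the paper's distribution of $\bar{\pi}$ over its $k$-sum) implicitly requires $\bar{\pi}\cdot F^t < \infty$ for each $t$, since otherwise the stationarity cancellation reads $\infty - \infty$.
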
 

%
%

In this paper, we obtain bounds on $\FF$ by finding $\bar{F}$ and $G$ for which~\eqref{eq:error_bound_condition} holds. 

We do not need $R$ and $\bar R$ to be irreducible. More generally, it is sufficient that there is a single absorbing communicating class (which can be different for $R$ and $\bar R$). This implies that we allow for transient states. Even though we are only interested in the steady-state behavior of our processes, it will be important for the application of the Markov reward approach to  explicitly model these transient states. The original proof of the Markov reward approach considers only irreducible processes. In~\ref{sec:introduction_proof_MRA} we provide a proof of the extension of the approach to processes with transient states.
We will use this extended result in a numerical example in Section~\ref{sec:extension_lp_numerical_experiment}. 

In addition to the bound on $\left| \bar{\FF}-\FF \right|$, the following theorem is given in~\cite{vandijk11inbook} as well, which is called the comparison result and can sometimes provide a better upper bound. 

\begin{theorem}[Result 9.3.2 in~\cite{vandijk11inbook}]
\label{thm:comparison_upper}
Suppose that $\bar{F}: S \to [0,\infty)$ satisfies
\begin{align}
\label{eq:comparison_upper_condition}
\bar{F}(n) - F(n) + \sum_{n^\prime\in S} \left( \bar{P}(n,n^\prime)-P(n,n^\prime) \right) D^t(n,n^\prime) \ge 0,
\end{align}
for all $n\in S$, $t \ge 0$. Then, 
\begin{align*}
\FF \le \bar{\FF}. 
\end{align*}
\end{theorem}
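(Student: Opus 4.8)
The plan is to translate the pointwise hypothesis~\eqref{eq:comparison_upper_condition} into a monotonicity statement relating the cumulative rewards of the two chains, and then to pass to the average-reward limit. First I would simplify the perturbation term. Since $\bar{P}(n,\cdot)$ and $P(n,\cdot)$ are both probability distributions, $\sum_{n'\in S}(\bar{P}(n,n')-P(n,n'))=0$, so in
\begin{align*}
\sum_{n'\in S}\bigl(\bar{P}(n,n')-P(n,n')\bigr)D^t(n,n')=\sum_{n'\in S}\bigl(\bar{P}(n,n')-P(n,n')\bigr)\bigl(F^t(n')-F^t(n)\bigr)
\end{align*}
the constant term $F^t(n)$ drops out, leaving $\sum_{n'}\bar{P}(n,n')F^t(n')-\sum_{n'}P(n,n')F^t(n')$. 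Using the recursion~\eqref{eq:cumulative_reward}, namely $\sum_{n'}P(n,n')F^t(n')=F^{t+1}(n)-F(n)$, the condition~\eqref{eq:comparison_upper_condition} becomes equivalent to
\begin{align*}
\bar{F}(n)+\sum_{n'\in S}\bar{P}(n,n')F^t(n')\ \ge\ F^{t+1}(n),\qquad \forall n\in S,\ t\ge 0.
\end{align*}

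Next I would introduce the cumulative reward $\bar{F}^t$ of the perturbed chain $\bar{R}$ with reward $\bar{F}$, which satisfies $\bar{F}^0\equiv 0$ together with the analogue of~\eqref{eq:cumulative_reward}, $\bar{F}^{t+1}(n)=\bar{F}(n)+\sum_{n'}\bar{P}(n,n')\bar{F}^t(n')$. The claim I would then prove by induction on $t$ is that $\bar{F}^t(n)\ge F^t(n)$ for all $n\in S$ and all $t\ge 0$. The base case $t=0$ is immediate, as both sides vanish. For the inductive step, assuming $\bar{F}^t\ge F^t$ pointwise and using that the transition probabilities $\bar{P}(n,n')$ are non-negative, the perturbed reward operator is monotone, so
\begin{align*}
\bar{F}^{t+1}(n)=\bar{F}(n)+\sum_{n'}\bar{P}(n,n')\bar{F}^t(n')\ \ge\ \bar{F}(n)+\sum_{n'}\bar{P}(n,n')F^t(n')\ \ge\ F^{t+1}(n),
\end{align*}
where the final inequality is exactly the reformulated condition above.

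Finally I would divide the inequality $\bar{F}^t(n)\ge F^t(n)$ by $t$ and let $t\to\infty$. By~\eqref{eq:average_gain} applied to $R$, the right-hand side converges to $\FF$, and the analogous average-reward convergence for $\bar{R}$ gives $\bar{F}^t(n)/t\to\bar{\FF}$, so that $\bar{\FF}\ge\FF$, which is the assertion $\FF\le\bar{\FF}$.

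I expect the only genuine subtlety to lie in this last step: the algebraic reformulation and the induction are routine, but passing to the limit presupposes that $\bar{F}^t(n)/t$ converges to $\bar{\FF}=\sum_{n}\bar{\pi}(n)\bar{F}(n)$, i.e.\ the analogue of~\eqref{eq:average_gain} for $\bar{R}$. This is where ergodicity of $\bar{R}$ (or, as discussed after Theorem~\ref{thm:Markov_reward_approach_result}, the presence of a single absorbing communicating class) and the finiteness of $\bar{\FF}$ must be invoked. Establishing this average-reward convergence, rather than the comparison inequality itself, is the point that requires care; the monotone-induction argument is otherwise elementary.
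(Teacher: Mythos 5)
Your proof is correct, but it follows a genuinely different route from the one the paper relies on. The paper does not prove Theorem~\ref{thm:comparison_upper} itself (it cites van Dijk); its template is the appendix proof of Theorem~\ref{thm:Markov_reward_approach_result}, which telescopes the difference of cumulative rewards as $\bar F^t - F^t = \sum_{k=0}^{t-1}\bar P^k\left[(\bar F - F) + (\bar P - P)F^{t-k-1}\right]$ and then weights by $\bar\pi$, using stationarity $\bar\pi \bar P^k = \bar\pi$ to collapse the sum; adapted to the one-sided hypothesis~\eqref{eq:comparison_upper_condition}, every summand is nonnegative and the conclusion follows after dividing by $t$. You instead establish the pointwise inequality $\bar F^t(n)\ge F^t(n)$ by monotone induction, using the same two ingredients the paper uses — the recursion~\eqref{eq:cumulative_reward} and the row-sum cancellation of $\bar P - P$ (the identity behind~\eqref{eq:difference_cumu_reward}) — but never invoking stationarity of $\bar\pi$ until the final limit. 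What your route buys: it is more elementary, and because the limit $t\to\infty$ is taken at a single fixed state, you avoid the interchange of limit and infinite $\bar\pi$-weighted sum that the paper's last step performs implicitly. What it loses: it is specific to the one-sided comparison; for the two-sided error bound of Theorem~\ref{thm:Markov_reward_approach_result}, absolute values destroy the monotone induction, and the $\bar\pi$-weighting (which collapses $\bar\pi\cdot\bar P^k G$ to $\bar\pi\cdot G$) becomes essential. You also correctly isolate the one genuine technical debt, which is shared by the paper's own argument: the average-reward convergence $\bar F^t(n)/t \to \bar{\FF}$ for the perturbed chain, i.e.\ the analogue of~\eqref{eq:average_gain} for $\bar R$, which rests on ergodicity (or a single absorbing communicating class, as discussed after Theorem~\ref{thm:Markov_reward_approach_result}) and finiteness of $\bar{\FF}$.
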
 

Similarly, if the LHS of~\eqref{eq:comparison_upper_condition} is non-positive, then $\FF \ge \bar{\FF}$.

\section{Problem formulation}
\label{sec:extension_lp_model_problem}


Recall that $P(n,n^\prime)$ and $\bar{P}(n,n^\prime)$ denote the transition probability of $R$ and $\bar{R}$, respectively. Let $\Delta(n,n^\prime) = \bar{P}(n,n^\prime) - P(n,n^\prime)$. 
From the result of Theorem~\ref{thm:Markov_reward_approach_result}, the following optimization problem comes up naturally to provide an upper bound on $\FF$. 

\begin{problem}[Upper bound] 
\label{pr:extension_lp_minstart}
\begin{align}
\nonumber
\textrm{min}\ &\sum_{n\in S}\left[\bar F(n) + G(n)\right]\bar{\pi}(n), \\
\label{eq:extension_lp_basic_constraint}
\textrm{s.t.}\ &\left| \bar{F}(n) - F(n) +  \sum_{n^\prime\in S} \Delta(n,n^\prime)D^t(n,n^\prime) \right| \le G(n),\quad \forall n\in S, t\geq 0,  \\
\nonumber
&\bar F(n)\ge 0, G(n)\ge 0, \quad \forall n\in S. 
\end{align}
\end{problem}
In this problem, $\bar{F}(n)$, $G(n)$ and $D^t_u(n)$ are variables and $\bar{\pi}(n)$, $\Delta(n,n^\prime)$ are parameters. Similarly, the following optimization problem gives a lower bound on $\FF$. 

\begin{problem}[Lower bound] 
\label{pr:extension_lp_maxstart}
\begin{align}
\nonumber
\textrm{max}\ &\sum_{n\in S}\left[\bar F(n) - G(n)\right]\bar{\pi}(n), \\
\textrm{s.t.}\ &\left| \bar{F}(n) - F(n) +  \sum_{n^\prime\in S} \Delta(n,n^\prime)D^t(n,n^\prime) \right| \le G(n),\quad \forall n\in S, t\geq 0,  \\
\nonumber
&\bar F(n)\ge 0, G(n)\ge 0, \quad \forall n\in S. 
\end{align}
\end{problem}

In addition, the following problems provide a direct upper or lower bound on $\FF$, which follows from the comparison result introduced in Section~\ref{sec:introduction_Markov_reward_approach}. 

\begin{problem}[Comparison upper bound] 
\label{pr:extension_lp_comparison_upper}
\begin{align}
\nonumber
\textrm{min}\ &\sum_{n\in S} \bar F(n)\bar{\pi}(n), \\
\textrm{s.t.}\ & \bar{F}(n) - F(n) +  \sum_{n^\prime\in S} \Delta(n,n^\prime)D^t(n,n^\prime) \ge 0,\quad \forall n\in S, t\geq 0,  \\
\nonumber
&\bar F(n)\ge 0, \quad \forall n\in S. 
\end{align}
\end{problem}

\begin{problem}[Comparison lower bound] 
\label{pr:extension_lp_comparison_lower}
\begin{align}
\nonumber
\textrm{max}\ &\sum_{n\in S} \bar F(n)\bar{\pi}(n), \\
\textrm{s.t.}\ & \bar{F}(n) - F(n) +  \sum_{n^\prime\in S} \Delta(n,n^\prime)D^t(n,n^\prime) \le 0,\quad \forall n\in S, t\geq 0,  \\
\nonumber
&\bar F(n)\ge 0, \quad \forall n\in S. 
\end{align}
\end{problem}

It will be seen from numerical results that in some cases the comparison result can provide a better upper or lower bound than that obtained from Problem~\ref{pr:extension_lp_minstart} or~\ref{pr:extension_lp_maxstart}. In the remainder of this paper, we only consider Problem~\ref{pr:extension_lp_minstart}, since the other problems can be solved in the same fashion. There are countably infinite variables and constraints in Problem~\ref{pr:extension_lp_minstart}. 
In the next two sections, we will reduce Problem~\ref{pr:extension_lp_minstart} to a linear program with a finite number of variables and constraints.

\section{Linear programming approach to error bounds}
\label{sec:extension_lp_lp_error_bound}

In this section, we use the idea from~\cite{goseling2016linear} that we consider bounding functions on $D^t(n,n^\prime)$ which are independent of $t$. Replacing $D^t(n,n^\prime)$ with these bounding functions in~\eqref{eq:extension_lp_basic_constraint}, we get rid of $t$ in the constraints and obtain sufficient conditions for~\eqref{eq:extension_lp_basic_constraint}. Simultaneously, we add several extra constraints to ensure that these newly introduced functions are indeed upper and lower bounds on $D^t(n,n^\prime)$. 

In~\eqref{eq:error_bound_condition}, since only transitions between the nearest neighbors are allowed, we have $\Delta(n,n+u) = 0$ for $u\notin N_{\parind(n)}$. Then, $\Delta(n,n^\prime) D^t(n,n^\prime)$ vanishes from~\eqref{eq:error_bound_condition} for all $ n^\prime - n \notin N_{\parind(n)}$. Thus, it is sufficient to only consider the bias terms between nearest neighbors, \ie $D^t(n,n+u)$, $u\in N_{\parind(n)}$. 

More precisely, consider functions $A: S\times S\rightarrow [0,\infty)$ and $B: S\times S\rightarrow [0,\infty)$, for which
\begin{align}
\label{eq:extenstion_lp_bounds_bias}
 -A(n,n+u) \leq D^t(n,n+u) \leq B(n,n+u),
\end{align} 
for all $t\geq 0$. Then, in Problem~\ref{pr:extension_lp_minstart}, replacing $D^t(n,n^\prime)$ with the bounding functions, we get rid of the time-dependent terms and obtain the following constraints that guarantee~\eqref{eq:extension_lp_basic_constraint},
\begin{align}
\nonumber
& \bar{F}(n) - F(n) + \sum_{u\in N_{\parind(n)}} \max\left\{\Delta(n,n+u)B(n,n+u), -\Delta(n,n+u)A(n,n+u)\right\} \\
& \qquad\qquad \le G(n), \\
\nonumber
& F(n) - \bar{F}(n) + \sum_{u\in N_{\parind(n)}} \max\left\{\Delta(n,n+u)A(n,n+u), -\Delta(n,n+u)B(n,n+u)\right\} \\
& \qquad\qquad \le G(n). 
\end{align}
Besides the constraints given above, additional constraints are necessary to guarantee that~\eqref{eq:extenstion_lp_bounds_bias} holds. In the next part, we establish these additional constraints. 

Recall that $D^t(n,n+u) = F^t(n+u) - F^t(n)$. We will show in the next section that $D^{t+1}(n,n+u)$ can be expressed as a linear combination of $D^t(m,m+v)$ where $v \in N_{\parind(m)}$. More precisely, there exists $\flowc(n,u,m,v) \ge 0$ for which the following equation holds, 
\begin{align}
\label{eq:extension_lp_bias_terms_recursive}
D^{t+1}(n,n+u) = F(n+u) - F(n) + \sum_{m\in S}\sum_{v\in N_{\parind(m)}} \flowc(n,u,m,v) D^t(m,m+v),
\end{align}
for $t\ge 0$. We will reduce the sum in the equation above to a sum over a finite number of states. Therefore, the convergence of the sum is not an issue. 
Then, the following inequalities are sufficient conditions for $-A(n,n^\prime)$ and $B(n,n^\prime)$ to be a lower and upper bound on $D^t(n,n+u)$, respectively,
\begin{align}
F(n+u) - F(n) + \sum_{m\in S}\sum_{v\in N_{\parind(m)}} \flowc(n,u,m,v) B(m,m+v) &\le B(n,n+u), \\
F(n+u) - F(n) - \sum_{m\in S}\sum_{v\in N_{\parind(m)}} \flowc(n,u,m,v) A(m,m+v) &\ge -A(n,n+u).
\end{align}

Summarizing the discussion above, the following problem gives an upper bound on $\FF$.

\begin{problem} 
\label{pr:extension_lp_minlp}
\begin{align}
\nonumber
\textrm{min}\ & \sum_{n\in S}\left[\bar F(n) + G(n)\right]\bar{\pi}(n), \\
\nonumber
\textrm{s.t.}\ & \bar{F}(n) - F(n) + \sum_{u\in N_{\parind(n)}} \max\left\{\Delta(n,n+u)B(n,n+u), -\Delta(n,n+u)A(n,n+u)\right\} \\
\label{eq:extension_lp_error_bound_upper}
& \qquad\qquad \le G(n), \\
\nonumber
& F(n) - \bar{F}(n) + \sum_{u\in N_{\parind(n)}} \max\left\{\Delta(n,n+u)A(n,n+u), -\Delta(n,n+u)B(n,n+u)\right\} \\
\label{eq:extension_lp_error_bound_lower}
& \qquad\qquad \le G(n), \\
\label{eq:extension_lp_bias_terms_recursive_repeat}
& D^{t+1}(n,n+u) = F(n+u) - F(n) + \sum_{m\in S}\sum_{v\in N_{\parind(m)}} \flowc(n,u,m,v) D^t(m,m+v), \\
\label{eq:extension_lp_bias_term_upper}
& F(n+u) - F(n) + \sum_{m\in S}\sum_{v\in N_{\parind(m)}} \flowc(n,u,m,v) B(m,m+v) \le B(n,n+u), \\
\label{eq:extension_lp_bias_term_lower}
& F(n) - F(n+u) + \sum_{m\in S}\sum_{v\in N_{\parind(m)}} \flowc(n,u,m,v) A(m,m+v) \le A(n,n+u), \\
\nonumber
& \flowc(n,u,m,v) \ge 0, \quad \text{for } n,m \in S, u\in N_{\parind(n)}, v\in N_{\parind(m)} \\
\nonumber
& A(n,n+u)\ge 0, B(n,n+u)\ge 0, \bar F(n)\ge 0, G(n)\ge 0, \quad \text{for }n,n^\prime\in S. 
\end{align}
\end{problem}

In the problem the variables are $\flowc(n,u,m,v)$, $A(n,n+u)$, $B(n,n+u)$, $D^t(n,n+u)$, $\bar{F}(n)$, $G(n)$ and the parameters are $\bar{\pi}(n)$, $F(n)$, $\Delta(n,n+u)$. Problem~\ref{pr:extension_lp_minlp} is non-linear since there are terms such as $\flowc(n,u,m,v)A(n,n^\prime)$ and $\flowc(n,u,m,v)B(n,n^\prime)$. Therefore, we apply the approach used in~\cite{goseling2016linear}. More precisely, first we find a set of $\flowc(n,u,m,v)$, for which~\eqref{eq:extension_lp_bias_terms_recursive_repeat} holds. Then, we plug the obtained $\flowc(n,u,m,v)$ into Problem~\ref{pr:extension_lp_minlp} as parameters and remove~\eqref{eq:extension_lp_bias_terms_recursive_repeat}. 
%
As a consequence, Problem~\ref{pr:extension_lp_minlp} becomes linear. In~\cite{goseling2016linear}, the set of $\flowc(n,u,m,v)$ is obtained by manual derivation. In the following part, we formulate a linear program where the variables are $\flowc(n,u,m,v)$. In the linear program $\flowc(n,u,m,v)$ are interpreted as flows among states.

\subsection{Linear program for finding $\flowc(n,u,m,v)$}
\label{ssec:extension_lp_flow_problem}


In this section, we formulate a linear program to obtain $\flowc(n,u,m,v)$ for which~\eqref{eq:extension_lp_bias_terms_recursive_repeat} holds. For the bias terms, using~\eqref{eq:cumulative_reward}, we get
\begin{multline}
\label{eq:extension_lp_flow_problem_demand}
D^{t+1}(n,n+u) = F^{t+1}(n+u) - F^{t+1}(n) \\
= F(n+u) - F(n) + \sum_{m\in S}[P(n+u,m) - P(n,m)] F^t(m). 
\end{multline}
Thus,~\eqref{eq:extension_lp_bias_terms_recursive_repeat} holds if and only if
\begin{align}
\label{eq:extension_lp_D_equl_F}
\sum_{m\in S}\sum_{v\in N_{\parind(m)}} \flowc(n,u,m,v) D^t(m,m+v) = \sum_{m\in S}[P(n+u,m) - P(n,m)] F^t(m). 
\end{align}
Rewriting the LHS of~\eqref{eq:extension_lp_D_equl_F}, we have
\begin{multline}
\sum_{m\in S}\sum_{v\in N_{\parind(m)}} \flowc(n,u,m,m+v) D^t(m,m+v) \\
= \sum_{m\in S}\sum_{v\in N_{\parind(m)}} \flowc(n,u,m,m+v) [F^t(m+v) - F^t(m)] \\
= \sum_{m\in S} \left\{ \sum_{v\in N_{\parind(m)}} \left[\flowc(n,u,m+v,-v)-\flowc(n,u,m,v)\right] \right\} F^t(m).
\end{multline}
In comparison with the RHS of~\eqref{eq:extension_lp_D_equl_F}, we obtain the following constraint that is sufficient for~\eqref{eq:extension_lp_D_equl_F} as well as~\eqref{eq:extension_lp_bias_terms_recursive_repeat},
\begin{align}
\label{eq:extension_lp_flow_constraint}
\sum_{v\in N_{\parind(m)}} \left[\flowc(n,u,m+v,-v)-\flowc(n,u,m,v)\right] = P(n+u,m) - P(n,m), 
\end{align}
for all $n,m\in S$, $u\in N_{\parind(n)}$. Intuitively, for a fixed $n\in S$ and a fixed $u\in N_{\parind(n)}$, $\flowc(n,u,m,v)$ can be interpreted as a flow from state $m$ to state $m+v$, and $P(n+u,m) - P(n,m)$ can be seen as the demand at state $m$. Then, intuitively~\eqref{eq:extension_lp_flow_constraint} means that the demand at every state $m$ is equal to the difference between the inflow and outflow of $m$. 

In the next part, we formulate a linear program with a finite number of constraints and variables. Moreover, we show that based the solution of this linear program we can obtain $\flowc(n,u,m,v) \ge 0$ that satisfies~\eqref{eq:extension_lp_flow_constraint} and hence satisfies~\eqref{eq:extension_lp_bias_terms_recursive_repeat}.  The objective of this linear program is to minimize the sum of all $\flowc(n,u,m,v)$. We note that in this paper we do not optimize with respect to the overall objective, which is to find the best error bound. In the discussion section, we provide an outlook on alternative objective functions that may be used.

We need a final piece of notation. Let $\finepar = \left\{ \finepar_j \right\}_{j\in J}$ be a refinement of partition $\parti$ defined in Definition~\ref{def:fine_partition}. Then, for any $n \in Z_j$ and $u\in N_j$, let $c(j,u)$ be the index of the component of partition $C$ that $n+u$ is located in. For $j\in J$ and $u\in N_{j}$, let 
\begin{align}
\label{eq:extension_lp_definition_Nju}
N_{j,u} = N_{j} \cup \left( u+N_{c(j,u)} \right).
\end{align}
Now, we consider the following problem and present Theorem~\ref{thm:extension_lp_equiv_flow_problem}. 

\begin{problem}
\label{pr:extension_lp_flow_v1}
\begin{align}
\nonumber
\textrm{min}\quad & \quad \sum_{j\in J}\sum_{u\in N_{j}}\sum_{d\in N_{j,u}}\sum_{v\in N_{c(j,d)}} \varphi_{j,u,d,v}, \\
\nonumber
\textrm{s.t.}\ & \sum_{v\in N_{c(j,d)}} \mathbf{1}\left( d+v\in N_{j,u} \right)\left[ \varphi_{j,u,d+v,-v} - \varphi_{j,u,d,v} \right] = p_{c(j,u),d-u} - p_{j,d}, \\
\label{eq:extension_lp_flow_constraint_finite}
& \qquad\qquad \forall j\in J, u\in N_j, d\in N_{j,u}, \\
\nonumber
& \varphi_{j,u,d,v}\ge 0, \qquad \forall j \in J, u\in N_{j}, d\in N_{j,u}, v\in N_{c(j,d)}.
\end{align}
\end{problem}

\begin{theorem}
\label{thm:extension_lp_equiv_flow_problem}
Problem~\ref{pr:extension_lp_flow_v1} is feasible and has a finite number of variables and constraints. Suppose that $\varphi_{j,u,d,v}$ is the optimal solution of Problem~\ref{pr:extension_lp_flow_v1}. Then, 
\begin{align}
\label{eq:extension_lp_definition_flow}
\flowc(n,u,m,v) = 
\begin{cases}
\varphi_{\fineparind(n),u,m-n,v}, & \text{ if } m\in n+N_{\fineparind(n),u} \text{ and } m+v \in n+N_{\fineparind(n),u}, \\
0, & \text{ otherwise}, 
\end{cases}
\end{align}
satisfies~\eqref{eq:extension_lp_flow_constraint}. 
\end{theorem}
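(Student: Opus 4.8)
My plan is to establish the three assertions in turn: that Problem~\ref{pr:extension_lp_flow_v1} has finitely many variables and constraints, that it is feasible, and that the $\flowc$ built from an optimal $\varphi$ satisfies~\eqref{eq:extension_lp_flow_constraint}. Finiteness is immediate, since the index set $J$ of the refinement is finite and all of $N_j$, $N_{j,u}$ and $N_{c(j,d)}$ are subsets of $\{-1,0,1\}^M$; hence the variables $\varphi_{j,u,d,v}$ and the constraints~\eqref{eq:extension_lp_flow_constraint_finite} range over a finite set.

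For feasibility I would exhibit an explicit flow, fixing $j\in J$ and $u\in N_j$ and reading $\varphi_{j,u,d,v}$ as the amount routed from node $d$ to node $d+v$ on the finite node set $N_{j,u}$; the constraint~\eqref{eq:extension_lp_flow_constraint_finite} then prescribes the net inflow $p_{c(j,u),d-u}-p_{j,d}$ at each node $d$. The key observation is that nearest-neighbour moves are reversible inside $S$: for $d\in N_j$ and $n\in Z_j$ we have $n+d\in S$, so $(n+d)+(-d)=n\in S$ with $-d\in\{-1,0,1\}^M$, giving $-d\in N_{c(j,d)}$; thus the edge $d\to 0$ is always admissible. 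I route everything through the two hubs $0$ and $u$, both of which lie in $N_j\subseteq N_{j,u}$ (recall $u\in N_j$ because $u$ is a transition of $R$ from $Z_j$). Concretely, I superpose (i) a gathering flow that sends $p_{j,d}$ along $d\to 0$ for each $d\in N_j$, admissible by reversibility, and (ii) a scattering flow that sends one unit along $0\to u$ and then $p_{c(j,u),w}$ along $u\to u+w$ for each $w\in N_{c(j,u)}$, admissible because $u\in N_{c(j,0)}=N_j$ and $u+N_{c(j,u)}\subseteq N_{j,u}$. A short computation of the divergence of (i) and (ii) at each node, using $\sum_{d\in N_j}p_{j,d}=1$ and $\sum_{w\in N_{c(j,u)}}p_{c(j,u),w}=1$, shows that their sum yields exactly the required net inflow $p_{c(j,u),d-u}-p_{j,d}$, so this nonnegative $\varphi$ is feasible.

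For the validity of~\eqref{eq:extension_lp_flow_constraint} I would substitute the definition~\eqref{eq:extension_lp_definition_flow} and split on $d:=m-n$ with $j=z(n)$. If $d\notin N_{j,u}$, then every occurrence of $\flowc$ in~\eqref{eq:extension_lp_flow_constraint} vanishes by construction, and the right-hand side $p_{c(j,u),d-u}-p_{j,d}$ is also zero because $d\notin N_j$ and $d-u\notin N_{c(j,u)}$. If $d\in N_{j,u}$, I use the refinement property to identify $N_{c(m)}=N_{c(n+d)}$ with $N_{c(j,d)}$, so the sum over $v\in N_{c(m)}$ becomes a sum over $v\in N_{c(j,d)}$; the definition then replaces $\flowc(n,u,m,v)$ and $\flowc(n,u,m+v,-v)$ by $\varphi_{j,u,d,v}$ and $\varphi_{j,u,d+v,-v}$, each carrying the indicator $\mathbf{1}(d+v\in N_{j,u})$ inherited from the support truncation. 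The left-hand side then matches~\eqref{eq:extension_lp_flow_constraint_finite} term by term and equals $p_{c(j,u),d-u}-p_{j,d}$, which by homogeneity of $R$ equals $P(n+u,m)-P(n,m)$.

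I expect feasibility to be the real work: the delicate point is recognising the reversibility of nearest-neighbour moves, which makes the gathering edges $d\to 0$ admissible no matter how the boundary of $S$ restricts transitions, together with the choice of $0$ and $u$ as hubs so that the two probability vectors---each summing to one---can be balanced into the prescribed demand. A secondary point to handle carefully in the validity step is that the refinement must be fine enough that $c(n+d)$ is determined by $(j,d)$ for every $d\in N_{j,u}$, and not only for $d\in N_j$; this is exactly what guarantees $N_{c(m)}=N_{c(j,d)}$ and hence that a single $\varphi$ works for all $n\in Z_j$. The substitution itself is otherwise routine bookkeeping.
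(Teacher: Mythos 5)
Your proof is correct, and its core — the feasibility argument — takes a genuinely different route from the paper's. The paper fixes $n\in \finepar_j$, forms the undirected graph on the nearest neighbors of $n$ and $n+u$, notes that it is connected and that the demands $P(n+u,m)-P(n,m)$ sum to zero, and then invokes a classical existence result for uncapacitated flows with zero-sum demands (citing~\cite{korte2012combinatorial}); homogeneity then translates the resulting flow into a feasible $\varphi$. You instead exhibit an explicit feasible point: a gathering flow of value $p_{j,d}$ on each edge $d\to 0$ and a scattering flow of value $1$ on $0\to u$ and $p_{c(j,u),w}$ on $u\to u+w$, with admissibility secured by the reversibility observation $-d\in N_{c(j,d)}$ and by $0,u\in N_j$, $u+N_{c(j,u)}\subseteq N_{j,u}$. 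I checked the divergence computation, including the corner cases $d=0$, $d=u$ and $u=0$ where self-loops cancel: the net inflow is indeed $p_{c(j,u),d-u}-p_{j,d}$ at every node, so your $\varphi$ is feasible. What the paper's argument buys is brevity; what yours buys is a self-contained certificate that needs neither the connectivity check nor the external flow theorem, and that could serve as an explicit warm start for the linear program. Your other two parts (the finite counting, and the verification that the truncated $\flowc$ of~\eqref{eq:extension_lp_definition_flow} satisfies~\eqref{eq:extension_lp_flow_constraint} by splitting on $m-n\in N_{\fineparind(n),u}$ and using homogeneity for the right-hand side) coincide with the paper's proof. Finally, the caveat you flag is real and is glossed over by the paper: the formulation of Problem~\ref{pr:extension_lp_flow_v1} itself, and hence both proofs, require $c(n+d)$ to depend only on $(\fineparind(n),d)$ for all $d\in N_{\fineparind(n),u}$, including the two-step displacements in $u+N_{c(j,u)}$, whereas Definition~\ref{def:fine_partition} only guarantees this for $d\in N_j$; this is an implicit standing assumption on the refinement, not a consequence of the stated definitions, so stating it explicitly as a hypothesis (or strengthening Definition~\ref{def:fine_partition}) is the more careful reading.
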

\begin{proof}
Fix some $j\in J$ and $u\in N_j$. Let $n$ be a state in $\finepar_j$. Consider an undirected graph $\mathcal{G} = (\mathcal{V}, \mathcal{E})$, where $\mathcal{V}$ contains all the nearest neighbors of $n$ and of $n+u$. Moreover, $e\in \mathcal{E}$ if and only if $e$ connects two nearest neighbors. It is easy to see that $\mathcal{G}$ is connected. From the discussion after~\eqref{eq:extension_lp_flow_constraint}, we see that~\eqref{eq:extension_lp_flow_constraint} intuitively means to find flows on $e\in \mathcal{E}$ such that the demand at every node $m\in \mathcal{V}$ is equal to the difference between the inflow and outflow of $m$. 

This is a classical flow problem in graph theory and combinatorial optimization. In our case, the graph is connected. Moreover, there is no capacity for the flows and all the demands sum up to $0$. Thus, there exists a feasible non-negative flow on $\mathcal{G}$ (see, for instance, Exercise 5 in Chapter 8 in~\cite{korte2012combinatorial}). In other words, there exists $\flowc_0(n,u,m,v) \ge 0$, where $m, m+v\in \mathcal{V}$, such that for all $m\in \mathcal{V}$,
\begin{multline}
\label{eq:feaible_flow}
\sum_{v\in N_{\parind(m)}} \mathbf{1}\left( m+v\in \mathcal{V} \right)\left[\flowc_0(n,u,m+v,-v)-\flowc_0(n,u,m,v)\right] \\ 
= P(n+u,m) - P(n,m).
\end{multline}
From~\eqref{eq:extension_lp_definition_Nju}, we see that $m\in \mathcal{V}$ if and only if $m = n+d$ for some $d\in N_{j,u}$. Take $\varphi_{j,u,d,v} = \flowc_0(n,u,n+d,v)$. Since $R$ is homogeneous with respect to partition $\parti$ as well as partition $\finepar$, $P(n+u, m) = p_{c(j,u), d-u}$ and $P(n,m) = p_{j,d}$. Therefore, we can verify that~\eqref{eq:feaible_flow} is equivalent to~\eqref{eq:extension_lp_flow_constraint_finite} hence Problem~\ref{pr:extension_lp_flow_v1} is feasible.


Suppose that $\varphi_{j,u,d,v}$ is the optimal solution of Problem~\ref{pr:extension_lp_flow_v1}. Then consider $\phi(n,u,m,v)$ where $n,m\in S$, $u\in N_{c(n)}$ and $v\in N_{c(m)}$. If $m\in n+N_{\fineparind(n),u}$ and $m+v \in n+N_{\fineparind(n),u}$, then $\varphi_{\fineparind(n),u,m-n,v}$ is well defined and satisfies~\eqref{eq:feaible_flow}. Thus, using $\varphi_{\fineparind(n),u,m-n,v} = \flowc(n,u,m,c)$ in~\eqref{eq:feaible_flow} we can verify that~\eqref{eq:extension_lp_flow_constraint} holds. Otherwise if $m\notin n+N_{\fineparind(n),u}$ or $m+v \notin n+N_{\fineparind(n),u}$,~\eqref{eq:extension_lp_flow_constraint} holds since $\flowc(n,u,m,v) = 0$ and for its RHS, $P(n+u,m) - P(n,m) = 0$. 

Finally we argue that Problem~\ref{pr:extension_lp_flow_v1} has a finite number of variables and constraints. Since there are $\left| J \right|$ components in partition $Z$ and at most $3^M$ possible transitions for every component, the number of the variables in Problem~\ref{pr:extension_lp_flow_v1} is bounded by $2\left| J \right|\cdot 27^{M}$ from above. Moreover, the number of the constraints is bounded from above by $2\left| J \right|\cdot 9^{M}$. 
\end{proof}

%

\subsection{Implementation of Problem~\ref{pr:extension_lp_minlp}}
\label{ssec:extension_lp_implementation}

Suppose that we have obtained a set of feasible coefficients $\varphi_{j,u,d,v}$ from Problem~\ref{pr:extension_lp_flow_v1}.
In this section, we show that by restricting $F(n)$, $A(n,n^\prime)$, $B(n,n^\prime)$ to be $C$-linear and using the partition structure of $S$ described in Section~\ref{sec:introduction_model_description}, Problem~\ref{pr:extension_lp_minlp} can be reduced to a linear program with a finite number of variables and constraints.

Since we only consider the bias terms between the nearest neighbors, we rewrite the bounding functions as $A_u(n)$ and $B_u(n)$ for $n\in S$ and $u\in N_{\parind(n)}$. 
Then, plugging $\varphi_{j,u,d,v}$ as parameters into Problem~\ref{pr:extension_lp_minlp} and removing~\eqref{eq:extension_lp_bias_terms_recursive_repeat}, Problem~\ref{pr:extension_lp_minlp} is equivalent to the following problem. 
\begin{problem}
\label{pr:extension_lp_min_lp_v1}
\begin{align}
\nonumber
\textrm{min}\ & \sum_{n\in S}\left[\bar F(n) + G(n)\right]\bar{\pi}(n), \\ 
\textrm{s.t.}\ & \bar{F}(n) - F(n) + \sum_{u\in N_{c(n)}} \max\left\{\Delta_{c(n),u}B_u(n), -\Delta_{c(n),u}A_u(n)\right\} - G(n) \le 0, \\ 
& F(n) - \bar{F}(n) + \sum_{u\in N_{c(n)}} \max\left\{\Delta_{c(n),u}A_u(n), -\Delta_{c(n),u}B_u(n)\right\} - G(n) \le 0, \\ 
& F(n+u) - F(n) + \sum_{d \in N_{z(n),u}} \sum_{v \in N_{c(z(n),d)}} \varphi_{z(n),u,d,v} B_v(n+d) - B_u(n) \le 0, \\ 
& F(n) - F(n+u) + \sum_{d \in N_{z(n),u}} \sum_{v \in N_{c(z(n),d)}} \varphi_{z(n),u,d,v} A_v(n+d) - A_u(n) \le 0, \\ 
\nonumber
& A_u(n)\ge 0, B_u(n)\ge 0, \bar{F}(n)\ge 0, G(n)\ge 0, \quad \text{for }n\in S, u\in N_{c(n)}. 
\end{align}
\end{problem}

In the problem the variables are $A_u(n)$, $B_u(n)$, $\bar{F}(n)$ and $G(n)$. Next, we give the reduction for Problem~\ref{pr:extension_lp_min_lp_v1} by restricting $\bar{F}$, $G(n)$, $A_u$ and $B_u$ to be $C$-linear. By Lemma~\ref{lem:introduction_component_linear_relation}, we know that $A_v(n+d)$ and $B_v(n+d)$ are $\finepar$-linear. Thus, it is easy to check that all the constraints in Problem~\ref{pr:extension_lp_min_lp_v1} have the form, 
\begin{align*}
H(n) \le 0,
\end{align*}
where $H(n)$ is $\finepar$-linear. 

For any $\finepar_j$ and $i \in \left\{ 1,\dots,M \right\}$, define $L_{j,i}$ and $U_{j,i}$ as 
\begin{align}
L_{j,i} = \min_{n\in \finepar_j} n_i, \qquad U_{j,i} = \sup_{n\in \finepar_j} n_i. 
\end{align}
Notice that $\finepar_j$ can be unbounded in dimension $i$, in which case $U_{j,i} = \infty$. Moreover, let $I(\finepar_j)$ be the set containing all the unbounded dimensions of $\finepar_j$ and $\partial\finepar_j$ be the corners of $\finepar_j$, \ie
\begin{align}
I(\finepar_j) =& \left\{ i \in \left\{ 1,2,\dots,M \right\} \mid U_{j,i} = \infty \right\}, \\
\partial\finepar_j =& \left\{ n\in \finepar_j \mid n_i = L_{j,i}, \ \forall i\in I(\finepar_j), \quad n_k \in \left\{ L_{j,k},U_{j,k} \right\}, \ \forall k\notin I(\finepar_j) \right\}.
\end{align}
Then, for the constraint $H(n) \le 0$ for $n\in \finepar_j$, sufficient and necessary conditions can be obtained in terms of the coefficients $h_{j,i}$. We give the following lemma to specify these conditions. The proof for this lemma is straightforward and hence is omitted. 

\begin{lemma}
\label{lem:extension_lp_equivalence_constraint}
Suppose that $H(n)$ is $\finepar$-linear. Then, $H(n) \le 0$ for all $n\in n\in \finepar_j$
if and only if
\begin{align}
\label{eq:extension_lp_z_linear_negative_coeff}
H(n) \le 0, \ \forall\ n\in \partial\finepar_j, \qquad h_{j,i} \le 0, \ \forall i\in J(\finepar_j).
\end{align}
\end{lemma}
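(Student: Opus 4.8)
The plan is to exploit that on a single component $H$ is a genuine affine function and that $\finepar_j$ is a box, so that the statement reduces to controlling the maximum of an affine function over a (possibly unbounded) product of integer intervals. First I would record the structural input on which everything rests: each refinement component has the form $\finepar_j = \{\, n\in S : L_{j,i}\le n_i \le U_{j,i},\ i=1,\dots,M \,\}$, a Cartesian product of integer intervals that is unbounded exactly in the coordinates $i\in I(\finepar_j)$. This is what guarantees $\partial\finepar_j\subseteq\finepar_j$ and makes $\partial\finepar_j$ the finite set of extreme points of $\finepar_j$, and it lets me write, for $n\in\finepar_j$,
\begin{align*}
H(n) = h_{j,0} + \sum_{i=1}^M h_{j,i} n_i .
\end{align*}
The claimed equivalence is then the assertion that maximizing this affine function over the box is governed by its values at the corners together with the signs of the coefficients in the unbounded directions.

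For the forward implication, suppose $H(n)\le 0$ on all of $\finepar_j$. Since $\partial\finepar_j\subseteq\finepar_j$, the corner inequalities $H(n)\le 0$ for $n\in\partial\finepar_j$ are immediate. To obtain $h_{j,i}\le 0$ for an unbounded coordinate $i\in I(\finepar_j)$, I would fix any $n^0\in\finepar_j$ and observe that $n^0 + t\, e_i\in\finepar_j$ for every integer $t\ge 0$, where $e_i$ denotes the $i$-th unit vector; evaluating gives $H(n^0 + t\, e_i) = H(n^0) + t\, h_{j,i}\le 0$ for all such $t$, which forces $h_{j,i}\le 0$, since otherwise the left-hand side diverges to $+\infty$.

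For the reverse implication I would prove the sharper claim that $\max_{n\in\finepar_j} H(n)$ is attained on $\partial\finepar_j$, bounding $H$ coordinate by coordinate. For each unbounded coordinate $i\in I(\finepar_j)$, the hypothesis $h_{j,i}\le 0$ together with $n_i\ge L_{j,i}$ gives $h_{j,i} n_i \le h_{j,i} L_{j,i}$; for each bounded coordinate $k\notin I(\finepar_j)$, the containment $L_{j,k}\le n_k\le U_{j,k}$ gives $h_{j,k} n_k \le \max\{ h_{j,k}L_{j,k},\, h_{j,k}U_{j,k}\}$. Summing over all coordinates shows $H(n)\le H(n^\ast)$, where $n^\ast$ is the corner obtained by setting $n^\ast_i = L_{j,i}$ for $i\in I(\finepar_j)$ and choosing $n^\ast_k\in\{L_{j,k},U_{j,k}\}$ to realise the maximum in each bounded coordinate. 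By construction $n^\ast\in\partial\finepar_j$, so $H(n)\le H(n^\ast)\le 0$ by the corner hypothesis, completing the argument.

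The coordinatewise bounds are routine; the only genuinely delicate point is the interaction between unboundedness and the corner set. Because $U_{j,i}=\infty$ is never attained, the extreme behaviour in an unbounded direction cannot be read off from a finite corner and must instead be captured by the coefficient condition $h_{j,i}\le 0$, whereas fixing the unbounded coordinates at their finite minima $L_{j,i}$ is exactly what keeps $\partial\finepar_j$ finite while still exposing the maximum. I would therefore take care to verify the two facts that rest on the box structure noted at the outset: that $\partial\finepar_j\subseteq\finepar_j$, so that the corner hypothesis applies to the maximizer $n^\ast$, and that the translates $n^0 + t\, e_i$ remain in $\finepar_j$ in the forward direction.
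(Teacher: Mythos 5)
Your proof is correct, and it is exactly the argument the paper intends: the paper in fact omits the proof of this lemma as ``straightforward,'' and the intended content is your reduction to maximizing an affine function over a box --- corner evaluations control the bounded coordinates, and the sign conditions $h_{j,i}\le 0$ control the unbounded ones (note the paper's $J(\finepar_j)$ in~\eqref{eq:extension_lp_z_linear_negative_coeff} is a typo for $I(\finepar_j)$, which you use correctly). Both directions of your argument are sound: the forward direction via the rays $n^0+te_i$ staying inside the component, the reverse via the coordinatewise bounds $h_{j,i}n_i\le h_{j,i}L_{j,i}$ (unbounded $i$) and $h_{j,k}n_k\le\max\{h_{j,k}L_{j,k},h_{j,k}U_{j,k}\}$ (bounded $k$), which together exhibit a maximizing corner $n^\ast\in\partial\finepar_j$. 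The genuinely valuable addition in your writeup is making the structural hypothesis explicit: the equivalence requires each $\finepar_j$ to be a Cartesian product of integer intervals. The paper never states this, although its definitions of $L_{j,i}$, $U_{j,i}$, $\partial\finepar_j$ and all of its examples presuppose it; for a non-box component the ``only if'' direction can fail --- for instance, for the diagonal component $\finepar_j=\{(t,t):t\ge 1\}$ in dimension $M=2$, the function $H(n)=n_1-n_2$ satisfies $H\le 0$ on all of $\finepar_j$ and at its single corner $(1,1)$, yet $h_{j,1}=1>0$. So your care in recording the box structure, and in verifying $\partial\finepar_j\subseteq\finepar_j$ and that the translates $n^0+te_i$ remain in the component, is precisely where the substance of the lemma lies.
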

%
%

For any $n\in \partial\finepar_j$, clearly $H(n) = h_{j,0}+\sum_{i=1}^M h_{j,i}n_i$ is linear in the coefficients $h_{j,i}$. For each bounded dimension, there are at two corners of $\finepar_j$. Thus,~\eqref{eq:extension_lp_z_linear_negative_coeff} contains at most $2^M$ linear constraints in $h_{j,i}$.  

Next, consider the objective function of Problem~\ref{pr:extension_lp_min_lp_v1}. In the next lemma, we show that it can be written as a linear combination of the coefficients $\bar{f}_{k,i}$ and $g_{k,i}$. The proof for the lemma is straightforward and hence is omitted. 

\begin{lemma}
\label{lem:extension_lp_equivalence_objective}
Suppose that $\bar{F}: S \rightarrow [0,\infty)$ and $G: S \rightarrow [0,\infty)$ are $\parti$-linear.
Then, 
\begin{multline}
\sum_{n\in S}\left[ \bar{F}(n)+G(n) \right]\bar{\pi}(n) = \sum_{k\in K} \left( \bar{f}_{k,0}+g_{k,0} \right) \sum_{n\in C_k}\bar{\pi}(n) \\
 + \sum_{k\in K} \sum_{i=1}^M \left( \bar{f}_{k,i}+g_{k,i} \right) \sum_{n\in C_k} n_i\bar{\pi}(n). 
\end{multline}
\end{lemma}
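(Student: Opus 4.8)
The plan is a direct computation: expand $\bar{F}$ and $G$ via their $\parti$-linear representations, swap the order of the (one finite, one countable) summations, and collapse the indicator using that $\parti$ is a partition.

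First I would invoke Definition~\ref{def:c_linear_function} to write $\bar{F}(n)+G(n) = \sum_{k\in K}\mathbf{1}(n\in C_k)\big[(\bar{f}_{k,0}+g_{k,0})+\sum_{i=1}^M(\bar{f}_{k,i}+g_{k,i})n_i\big]$, and substitute this into $\sum_{n\in S}[\bar{F}(n)+G(n)]\bar{\pi}(n)$. Next I would interchange the sum over $n\in S$ with the sum over $k\in K$. After the interchange the indicator $\mathbf{1}(n\in C_k)$ simply restricts the inner sum to the states $n\in C_k$, and since the coefficients $\bar{f}_{k,i}+g_{k,i}$ do not depend on $n$ they factor out of the inner sums; splitting the constant term from the $M$ linear terms then produces exactly the two sums on the right-hand side.

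Two small points deserve attention, and the interchange of summation is the one I would flag. Because $K$ is finite this is a finite sum of countable series, so the interchange is valid term by term; moreover $\bar{F},G\ge 0$ and $\bar{\pi}\ge 0$ make every summand non-negative, so by Tonelli's theorem the identity holds as an equality in $[0,\infty]$, and in particular the left-hand side is finite if and only if the right-hand side is --- consistent with the paper not assuming a priori that $\bar{\FF}$ is finite. The collapse of the indicator uses only properties~1 and~2 of Definition~\ref{def:partition}, namely that the components $C_k$ cover $S$ and are pairwise disjoint, so each $n$ lies in exactly one $C_k$ and no double counting occurs. Beyond these bookkeeping remarks there is no genuine obstacle, which is why the authors omit the proof.
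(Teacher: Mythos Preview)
Your proposal is correct and is precisely the direct computation the paper has in mind; the authors explicitly omit the proof as ``straightforward,'' and your expansion of the $\parti$-linear representation followed by the (finite--countable) interchange of sums and collapse of the indicator is the natural way to carry it out. Your side remarks on Tonelli and on only needing properties~1 and~2 of Definition~\ref{def:partition} are accurate and do not deviate from the intended argument.
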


Therefore, based on the two lemmas above, we give the main result of this section in the following theorem. 

\begin{theorem}
Suppose that $\bar{F}$, $G$, $A_u$ and $B_u$ are $C$-linear. Then, Problem~\ref{pr:extension_lp_min_lp_v1} can be reduced to a linear program with a finite number of variables and constraints. 
\end{theorem}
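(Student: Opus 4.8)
The plan is to exploit the $C$-linear restriction together with the refinement $Z$ to turn each of the three ingredients of Problem~\ref{pr:extension_lp_min_lp_v1} — its decision variables, its objective, and its constraints — into finite, linear objects. First I would count the variables. Restricting $\bar F$, $G$, $A_u$, $B_u$ to be $C$-linear means each is encoded by its coefficients: $\bar F$ and $G$ by $\{\bar f_{k,i}\}$ and $\{g_{k,i}\}$ with $k\in K$ and $0\le i\le M$, and $A_u,B_u$ by coefficients $\{a^u_{k,i}\},\{b^u_{k,i}\}$ ranging over $k\in K$, $u\in N_{c(n)}$, and $0\le i\le M$. Since $|K|$, $M$, and the cardinality of each transition set (at most $3^M$) are finite, the reduced program has finitely many decision variables. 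The objective is handled immediately by Lemma~\ref{lem:extension_lp_equivalence_objective}: it becomes a fixed linear combination of the coefficients $\bar f_{k,i}+g_{k,i}$ with constant weights $\sum_{n\in C_k}\bar\pi(n)$ and $\sum_{n\in C_k} n_i\bar\pi(n)$ computed from the known $\bar\pi$.

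The core of the argument concerns the constraints, which currently must hold for all $n\in S$. I would work component-by-component on the refinement $Z$. The first observation is that $c$ is constant on each $Z_j$: taking $u=0\in N_j$ in the refinement condition of Definition~\ref{def:fine_partition} gives $c(n)=c(n')$ for all $n,n'\in Z_j$, so $Z_j$ lies in a single $C_k$ and every component-indexed datum ($N_{c(n)}$, $\Delta_{c(n),u}$, $N_{z(n),u}$, $c(z(n),d)$, and the parameters $\varphi_{z(n),u,d,v}$) is constant on $Z_j$. By Lemma~\ref{lem:introduction_component_linear_relation}, each shifted function $F(n+u)$, $A_v(n+d)$, $B_v(n+d)$ is $Z$-linear, while $F(n)$, $\bar F(n)$, $G(n)$, $A_u(n)$, $B_u(n)$ are affine on $Z_j$. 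Hence, on $Z_j$, the two bound constraints have left-hand sides that are $Z$-linear functions whose coefficients are affine in the decision coefficients, and the two comparison constraints have left-hand sides of the form (affine function) $+\sum_u \max\{\text{affine},\text{affine}\}$.

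The one genuinely non-routine step is eliminating the $\max$. I would use that $a(n)+\sum_{u}\max\{\beta_u(n),\gamma_u(n)\}\le 0$ holds on $Z_j$ if and only if $a(n)+\sum_u \sigma_u(n)\le 0$ holds on $Z_j$ for every selection $\sigma$ that picks $\sigma_u\in\{\beta_u,\gamma_u\}$ for each admissible $u$. Because each transition set has cardinality at most $3^M$, there are at most $2^{3^M}$ such selections, and each yields a single inequality of the form $H(n)\le 0$ on $Z_j$ with $H$ being $Z$-linear. Every constraint of Problem~\ref{pr:extension_lp_min_lp_v1} is thereby reduced, on each $Z_j$, to finitely many inequalities ``$Z$-linear function $\le 0$ on $Z_j$''. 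Finally I would invoke Lemma~\ref{lem:extension_lp_equivalence_constraint} to replace each such inequality by its finite equivalent: non-positivity at the at most $2^M$ corners $\partial Z_j$ together with non-positivity of the slope coefficients in the unbounded directions $I(Z_j)$, each of which is linear in the decision coefficients.

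Summing over the finitely many $j\in J$, the finitely many original constraints, the finitely many selections, and the finitely many corners gives a linear program with finitely many variables and constraints, which completes the reduction. I expect the principal care to be bookkeeping rather than conceptual: verifying that on each $Z_j$ all component-dependent data are indeed constant, that the shifted bounding functions land in components on which the $Z$-linearity of Lemma~\ref{lem:introduction_component_linear_relation} legitimately applies, and that the corner reduction of Lemma~\ref{lem:extension_lp_equivalence_constraint} is used with the correct unbounded-direction set $I(Z_j)$. Only the $\max$-elimination requires a genuine (if elementary) argument; everything else reduces to invoking the two lemmas and tallying the resulting finite collections.
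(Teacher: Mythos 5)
Your proposal is correct and follows essentially the same route as the paper's proof: encode the $C$-linear functions by their finitely many coefficients, convert the objective with Lemma~\ref{lem:extension_lp_equivalence_objective}, convert each constraint on each $\finepar_j$ into corner and slope conditions with Lemma~\ref{lem:extension_lp_equivalence_constraint}, and tally the counts. The one place you go beyond the paper is the elimination of the $\max$ terms, which the paper's proof passes over in silence; your selection argument (replace $\sum_u \max\{\beta_u,\gamma_u\}\le 0$ by the at most $2^{3^M}$ inequalities obtained from all selections) is valid, since the maximum of the sum over selections equals the sum of the maxima. A cheaper resolution, consistent with the paper's stated count of $|J|(M+2^M)$ constraints per constraint family, is to note that $\Delta_{c(n),u}$ is a known parameter and $A_u(n),B_u(n)\ge 0$ are required, so $\max\{\Delta_{c(n),u}B_u(n),\,-\Delta_{c(n),u}A_u(n)\}$ equals $\Delta_{c(n),u}B_u(n)$ when $\Delta_{c(n),u}\ge 0$ and $-\Delta_{c(n),u}A_u(n)$ otherwise, making each left-hand side $\finepar$-linear outright with no blow-up in the number of constraints. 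Finally, the bookkeeping item you flag about the shifted functions $B_v(n+d)$ is indeed the thinnest point: for $d\in N_{\fineparind(n),u}$ with entries of magnitude $2$, the affineness of $B_v(n+d)$ on $\finepar_j$ needs $c(n+d)$ to be constant on $\finepar_j$, which Definition~\ref{def:fine_partition} only guarantees for one-step shifts; this is an implicit assumption of the paper (already needed for the notation $c(j,d)$ in Problems~\ref{pr:extension_lp_flow_v1} and~\ref{pr:extension_lp_min_lp_v1} to make sense), so it affects your proof and the paper's equally rather than constituting a gap specific to your argument.
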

\begin{proof}
From Lemmas~\ref{lem:extension_lp_equivalence_constraint} and~\ref{lem:extension_lp_equivalence_objective}, we see that Problem~\ref{pr:extension_lp_min_lp_v1} can be reduced to a linear program where the coefficients of the functions are variables. 
Next, we will show that there is a finite number of variables and constraints in the reduced problem. 

There are at most $\left| K \right|$ components and at most $3^M$ transitions from each state. Since $\bar{F}$, $G$, $A_u$ and $B_u$ are $C$-linear, the total number of coefficients is at most $2\left| K \right|(3^M+1)(M+1)$. Hence, the number of variables in Problem~\ref{pr:extension_lp_min_lp_v1} is finite. Moreover, for each component $\finepar_j$, there are at most $2^M$ corners and at most $M$ unbounded dimensions. Hence, each constraint in Problem~\ref{pr:extension_lp_min_lp_v1} can be reduced to at most $\left| J \right|(M+2^M)$ constraints. Then, the number of constraints is finite.  
\end{proof}

\section{Numerical experiments}
\label{sec:extension_lp_numerical_experiment}

In this section, we consider some numerical examples for various queueing networks and establish upper and lower bounds on various performance measures. We have used Pyomo~\cite{hart2017pyomo}, a Python-based, open-source optimization modeling language package, to implement the optimization problems. The Gurobi solver~\cite{gurobi} has been used to obtain solutions to these problems.


\subsection{Finite two-node tandem system}

%
Consider a tandem system containing two nodes. Every job arrives at Node 1 according to a Poisson process and then goes to Node 2. Each node has a capacity for jobs that can be allowed. Let $M$ and $N$ denote the capacity of Node 1 and Node 2, respectively. An arriving job is rejected and lost if Node 1 is saturated. When Node 2 is saturated, a job remains at Node 1 upon completion. Let $\lambda$ be the arrival rate. For Node 1, we consider a threshold $T \le M$. The service rate is $\mu_1$ if the number of jobs in Node 1 is no more than $T$ and $\mu_1^*$ otherwise. The service rate of Node 2 is always $\mu_2$. Assume that $\lambda < \mu_1$, $\lambda < \mu_1^*$ and $\lambda < \mu_2$. This system does not have a product-form stationary probability according to~\cite{vandijk1988simple}. 

\subsubsection*{The original random walk}

Let $n=(n_1,n_2)$ represent the number of jobs in the system. Then the state space is $S = \left\{ 0,1,\dots \right\}^2$. Note that the tandem system is a continuous-time system. We apply the uniformization technique introduced in~\cite{grassmann1977transient} to transform the system into a discrete-time random walk $R$. Without loss of generality, we assume that $\lambda + \max\left\{ \mu_1,\mu_1^* \right\} + \mu_2 \le 1$ and take uniformization constant $1$. First we describe the resulting transition probabilities for $n\in\left\{0,1,\dots,M\right\} \times \left\{0,1,\dots,N\right\}$
\begin{align}
P(n,n+e_1) &= \lambda\mathbf{1}(n_1 < M), \\ 
P(n,n-e_2) &= \mu_2\mathbf{1}(n_2 > 0), \\
P(n,n+d_1) &= 
\begin{cases}
\mu_1\mathbf{1}(n_1 > 0, n_2 < N), & n_1 \le T, \\
\mu_1^*\mathbf{1}(n_2 < N), & n_1 > T,
\end{cases} \\
P(n,n) &= 1 - \sum_{u\in \left\{ e_1,d_1,-e_2 \right\}}P(n,n+u),
\end{align}
where $e_1 = (1,0)$, $d_1 = (-1,1)$ and $e_2 = (0,1)$. 

We see that $\left\{0,1,\dots,M\right\} \times \left\{0,1,\dots,N\right\}$ forms a communicating class. Next, we define transition probabilities for the states outside $\left\{0,1,\dots,M\right\} \times \left\{0,1,\dots,N\right\}$ in such a way that these states are transient and such that $\left\{0,1,\dots,M\right\} \times \left\{0,1,\dots,N\right\}$ is absorbing.
The remaining transition probabilities are
\begin{align}
P(n,n+e_1) &= \lambda, \\ 
P(n,n-e_2) &= \mu_2\mathbf{1}(n_2 > 0), \\
P(n,n+d_1) &= 
\begin{cases}
\mu_1\mathbf{1}(n_1 > 0), & n_1 \le T, \\
\mu_1^*, & n_1 > T,
\end{cases} \\
P(n,n) &= 1 - \sum_{u\in \left\{ e_1,d_1,-e_2 \right\}}P(n,n+u),
\end{align}
The transition probabilities of $R$ are shown in Figure~\ref{fig:extension_lp_tandem2dB_original_extend}.

\begin{figure}[htb!]
\centering
\begin{tikzpicture}[scale = 0.8]
\draw [->, >=stealth'] (0,0) -- (0,8) node[above, thick] {$n_2$};
\draw [->, >=stealth'] (0,0) -- (13,0) node[right, thick] {$n_1$};
\draw [thick] (0,5) node[left]{$N$} -- (9,5);
\draw [thick] (9,0) node[below]{$M$} -- (9,5);
\draw [dashed] (4,0) node[below]{$T$} -- (4,8);

\draw [->, very thick] (3,3) -- ++(0.8,0) node[above] {\footnotesize $\lambda$};
\draw [->, very thick] (3,3) -- ++(-0.8,0.8) node[above] {\footnotesize $\mu_1$};
\draw [->, very thick] (3,3) -- ++(0,-0.8) node[left] {\footnotesize $\mu_2$}; 	

\draw [->, very thick] (3,7) -- ++(0.8,0) node[above] {\footnotesize $\lambda$};
\draw [->, very thick] (3,7) -- ++(-0.8,0.8) node[above] {\footnotesize $\mu_1$};
\draw [->, very thick] (3,7) -- ++(0,-0.8) node[left] {\footnotesize $\mu_2$};

\draw [->, very thick] (6,3) -- ++(0.8,0) node[above] {\footnotesize $\lambda$};
\draw [->, very thick] (6,3) -- ++(-0.8,0.8) node[above] {\footnotesize $\mu_1^*$};
\draw [->, very thick] (6,3) -- ++(0,-0.8) node[left] {\footnotesize $\mu_2$}; 	

\draw [->, very thick] (6,7) -- ++(0.8,0) node[above] {\footnotesize $\lambda$};
\draw [->, very thick] (6,7) -- ++(-0.8,0.8) node[above] {\footnotesize $\mu_1^*$};
\draw [->, very thick] (6,7) -- ++(0,-0.8) node[left] {\footnotesize $\mu_2$};

\draw [->, very thick] (11,7) -- ++(0.8,0) node[above] {\footnotesize $\lambda$};
\draw [->, very thick] (11,7) -- ++(-0.8,0.8) node[above] {\footnotesize $\mu_1^*$};
\draw [->, very thick] (11,7) -- ++(0,-0.8) node[left] {\footnotesize $\mu_2$};


\draw [->, very thick] (9,3) -- ++(-0.8,0.8) node[above] {\footnotesize $\mu_1^*$};
\draw [->, very thick] (9,3) -- ++(0,-0.8) node[left] {\footnotesize $\mu_2$}; 	

\draw [->, very thick] (3,0) -- ++(0.8,0) node[above] {\footnotesize $\lambda$};
\draw [->, very thick] (3,0) -- ++(-0.8,0.8) node[above] {\footnotesize $\mu_1$};


\draw [->, very thick] (6,0) -- ++(0.8,0) node[below] {\footnotesize $\lambda$};
\draw [->, very thick] (6,0) -- ++(-0.8,0.8) node[above] {\footnotesize $\mu_1^*$};

\draw [->, very thick] (9,0) -- ++(-0.8,0.8) node[above] {\footnotesize $\mu_1^*$};

\draw [->, very thick] (11,0) -- ++(0.8,0) node[above] {\footnotesize $\lambda$};
\draw [->, very thick] (11,0) -- ++(-0.8,0.8) node[above] {\footnotesize $\mu_1$};

\draw [->, very thick] (0,3) -- ++(0.8,0) node[right] {\footnotesize $\lambda$};
\draw [->, very thick] (0,3) -- ++(0,-0.8) node[left] {\footnotesize $\mu_2$};

\draw [->, very thick] (0,5) -- ++(0.8,0) node[above] {\footnotesize $\lambda$};
\draw [->, very thick] (0,5) -- ++(0,-0.8) node[left] {\footnotesize $\mu_2$};

\draw [->, very thick] (0,7) -- ++(0.8,0) node[above] {\footnotesize $\lambda$};
\draw [->, very thick] (0,7) -- ++(0,-0.8) node[left] {\footnotesize $\mu_2$};

\draw [->, very thick] (0,0) -- ++(0.8,0) node[below] {\footnotesize $\lambda$};

\draw [->, very thick] (3,5) -- ++(0.8,0) node[above] {\footnotesize $\lambda$};
\draw [->, very thick] (3,5) -- ++(0,-0.8) node[right] {\footnotesize $\mu_2$};

\draw [->, very thick] (6,5) -- ++(0.8,0) node[above] {\footnotesize $\lambda$};
\draw [->, very thick] (6,5) -- ++(0,-0.8) node[right] {\footnotesize $\mu_2$};


\draw [->, very thick] (9,5) -- ++(0,-0.8) node[right] {\footnotesize $\mu_2$};
\end{tikzpicture}
\caption{Transition probabilities of $R$. }
\label{fig:extension_lp_tandem2dB_original_extend}
\end{figure}
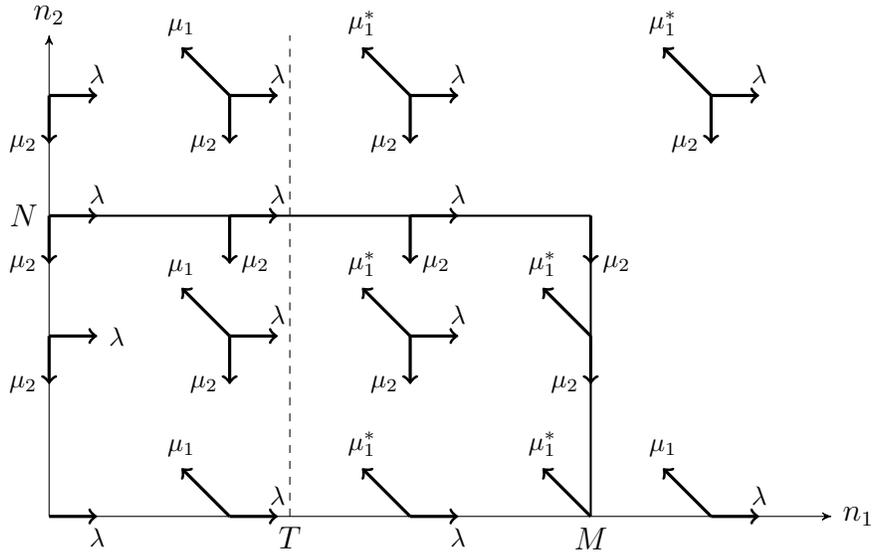


From the transition structure in Figure~\ref{fig:extension_lp_tandem2dB_original_extend}, it is clear that we can consider the partition $C$ of $S$ given in Figure~\ref{fig:extension_lp_tandem2dB_partition}. 

\begin{figure}[htb!]
\centering
\begin{tikzpicture}[scale = 1]
\foreach \i in {1,...,7}
\foreach \j in {1,...,5}
\filldraw [gray] (\i, \j) circle (2pt);

\foreach \i in {1,...,7}
\filldraw [gray] (\i,0) circle (2pt);

\foreach \j in {1,...,5}
\filldraw [gray] (0, \j) circle (2pt);

\filldraw [gray] (0, 0) circle (2pt);

\draw [->, >=stealth', gray] (0,0) -- (0,6) node[left, black, thick] {$n_2$};
\draw [->, >=stealth', gray] (0,0) -- (8,0) node[below, black, thick] {$n_1$};

\draw [rounded corners] (-0.4,-0.4) rectangle (0.4,0.4) node at (0.2,0.2) {$C_1$};

\draw [rounded corners] (0.6,-0.4) rectangle (2.4,0.4) node at (2,-0.3){$T$};
\draw node at (1.5,0.2) {$C_2$};

\draw [rounded corners] (2.6,-0.4) rectangle (4.4,0.4);
\draw node at (3.5,0.2) {$C_3$};

\draw [rounded corners] (4.6,-0.4) rectangle (5.4,0.4) node at (5,-0.3){$M$};
\draw node at (5.2,0.2) {$C_4$};

\draw [rounded corners] (-0.4,0.6) rectangle (0.4,3.4) node at (0.2,2.5) {$C_5$};

\draw [rounded corners] (0.6,0.6) rectangle (2.4,3.4) node at (1.5,2.5) {$C_6$};

\draw [rounded corners] (2.6,0.6) rectangle (4.4,3.4) node at (3.5,2.5) {$C_7$};

\draw [rounded corners] (4.6,0.6) rectangle (5.4,3.4) node at (5,2.5) {$C_8$};

\draw [rounded corners] (-0.4,3.6) rectangle (0.4,4.4) node at (0.2,3.8) {$C_9$};
\draw node at (-0.3,4){$N$};

\draw [rounded corners] (0.6,3.6) rectangle (2.4,4.4) node at (1.5,3.8) {$C_{10}$};

\draw [rounded corners] (2.6,3.6) rectangle (4.4,4.4) node at (3.5,3.8) {$C_{11}$};

\draw [rounded corners] (4.6,3.6) rectangle (5.4,4.4) node at (5.2,3.8) {$C_{12}$};

\draw [rounded corners] (7.4,-0.4) -- (5.6,-0.4) -- (5.6,0.4) -- (7.4,0.4) node at (6.5,0.2) {$C_{13}$};

\draw [rounded corners] (7.4,0.6) -- (5.6,0.6) -- (5.6,3.4) -- (7.4,3.4) node at (6.5,2.5) {$C_{14}$};

\draw [rounded corners] (7.4,3.6) -- (5.6,3.6) -- (5.6,4.4) -- (7.4,4.4) node at (6.5,3.8) {$C_{15}$};

\draw [rounded corners] (7.4,4.6) -- (5.6,4.6) -- (5.6,5.4) node at (6.5,4.8) {$C_{16}$};

\draw [rounded corners] (4.6,5.4) -- (4.6,4.6) -- (5.4,4.6) -- (5.4,5.4) node at (5.2,4.8) {$C_{17}$};

\draw [rounded corners] (2.6,5.4) -- (2.6,4.6) -- (4.4,4.6) -- (4.4,5.4) node at (3.5,4.8) {$C_{18}$};

\draw [rounded corners] (0.6,5.4) -- (0.6,4.6) -- (2.4,4.6) -- (2.4,5.4) node at (1.5,4.8) {$C_{19}$};

\draw [rounded corners] (-0.4,5.4) -- (-0.4,4.6) -- (0.4,4.6) -- (0.4,5.4) node at (0.2,4.8) {$C_{20}$};
\end{tikzpicture}
\caption{Partition $C$ of $S = \left\{ 0,1,\dots \right\}^2$. }
\label{fig:extension_lp_tandem2dB_partition}
\end{figure}
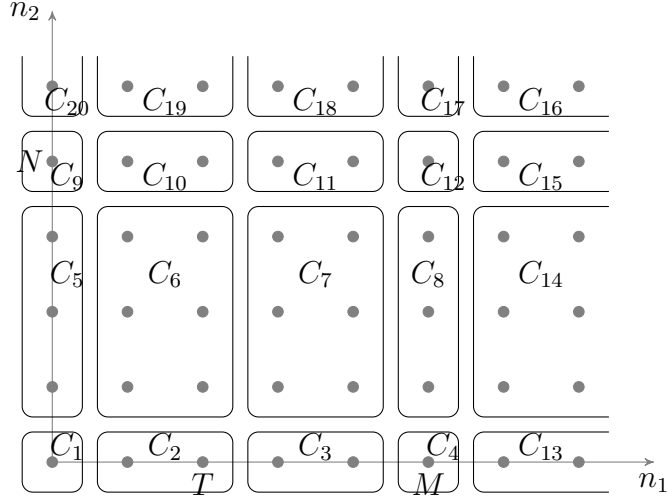

\subsubsection*{The perturbed random walk}

For the perturbed random walk, consider an $\bar{R}$ in $S$. The transition probabilities of $\bar{R}$ are 
\begin{align}
P(n,n+e_1) &= \lambda\mathbf{1}(n+e_1 \in S), \\ 
P(n,n-e_2) &= \mu_2\mathbf{1}(n-e_2 \in S), \\
P(n,n+d_1) &= 
\begin{cases}
\mu_1\mathbf{1}(n+d_1 \in S), & n_1 \le T, \\
\mu_1^*\mathbf{1}(n+d_1 \in S), & n_1 > T,
\end{cases} \\
P(n,n) &= 1 - \sum_{u\in \left\{ e_1,d_1,-e_2 \right\}}P(n,n+u). 
\end{align}
The transition probabilities of $\bar{R}$ are shown in Figure~\ref{fig:extension_lp_tandem2dB_perturb}. We can verify that the stationary probability distribution of $\bar{R}$ is 
\begin{align}
\bar{\pi}(n) = 
\begin{cases}
C\cdot \rho_1^{n_1}\sigma^{n_2}, & n_1 \le T, \\
C\cdot \rho_1^{T}\rho_2^{n_1 - T}\sigma^{n_2}, & n_1 > T, \\
\end{cases}
\end{align}
where $\rho_1 = \lambda/\mu_1$, $\rho_2 = \lambda/\mu_1^*$, $\sigma = \lambda/\mu_2$ and $C$ is the normalization constant, \ie $C^{-1} = (1-\rho_1)^{-1}(1-\rho_1^{T+1})(1-\sigma)^{-1} + \rho_1^T\rho_2(1-\rho_2)^{-1}(1-\sigma)^{-1}$. 

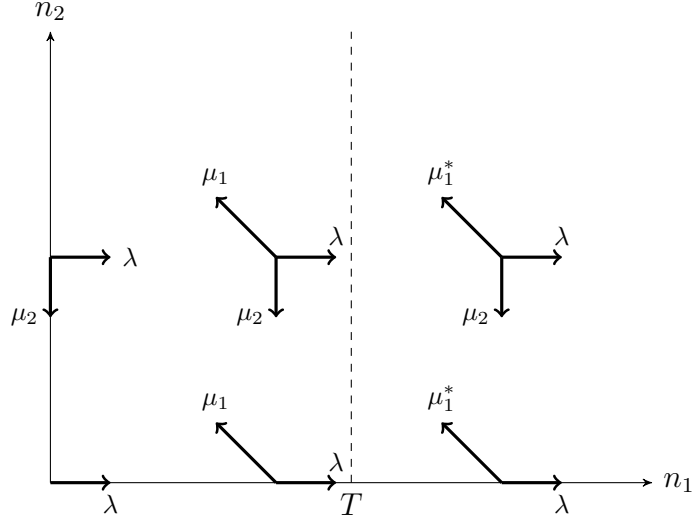
\begin{figure}[htb!]
\centering
\begin{tikzpicture}[scale = 1]
\draw [->, >=stealth'] (0,0) -- (0,6) node[above, thick] {$n_2$};
\draw [->, >=stealth'] (0,0) -- (8,0) node[right, thick] {$n_1$};
\draw [dashed] (4,0) node[below]{$T$} -- (4,6);

\draw [->, very thick] (3,3) -- ++(0.8,0) node[above] {\footnotesize $\lambda$};
\draw [->, very thick] (3,3) -- ++(-0.8,0.8) node[above] {\footnotesize $\mu_1$};
\draw [->, very thick] (3,3) -- ++(0,-0.8) node[left] {\footnotesize $\mu_2$}; 	

\draw [->, very thick] (6,3) -- ++(0.8,0) node[above] {\footnotesize $\lambda$};
\draw [->, very thick] (6,3) -- ++(-0.8,0.8) node[above] {\footnotesize $\mu_1^*$};
\draw [->, very thick] (6,3) -- ++(0,-0.8) node[left] {\footnotesize $\mu_2$}; 	


\draw [->, very thick] (3,0) -- ++(0.8,0) node[above] {\footnotesize $\lambda$};
\draw [->, very thick] (3,0) -- ++(-0.8,0.8) node[above] {\footnotesize $\mu_1$};


\draw [->, very thick] (6,0) -- ++(0.8,0) node[below] {\footnotesize $\lambda$};
\draw [->, very thick] (6,0) -- ++(-0.8,0.8) node[above] {\footnotesize $\mu_1^*$};


\draw [->, very thick] (0,3) -- ++(0.8,0) node[right] {\footnotesize $\lambda$};
\draw [->, very thick] (0,3) -- ++(0,-0.8) node[left] {\footnotesize $\mu_2$};


\draw [->, very thick] (0,0) -- ++(0.8,0) node[below] {\footnotesize $\lambda$};



\end{tikzpicture}
\caption{State space and transition rates of $\bar{R}$. }
\label{fig:extension_lp_tandem2dB_perturb}
\end{figure}

First, we consider the probability that an arriving job is rejected, \ie $F(n) = \mathbf{1}(n_1 = M)$. For instance, let $M = N$ and $T = 4$. Moreover, take for example $\lambda/\mu_1 = 1/2$, $\lambda/\mu_1^* = 1/3$ and $\lambda/\mu_2 = 1/3$. In Figure~\ref{fig:extension_lp_tandem2dB_block_probability_M}, we plot bounds on $\FF$ for various $M$. In addition, we plot the upper bound given by the comparison result in Problem~\ref{pr:extension_lp_comparison_upper}. The upper and lower bounds are denoted by $\FF_u$ and $\FF_l$ respectively, and the upper bound given by comparison result is denoted by $\FF^{(c)}_u$. Note that the y-axis is in logarithm scale. 

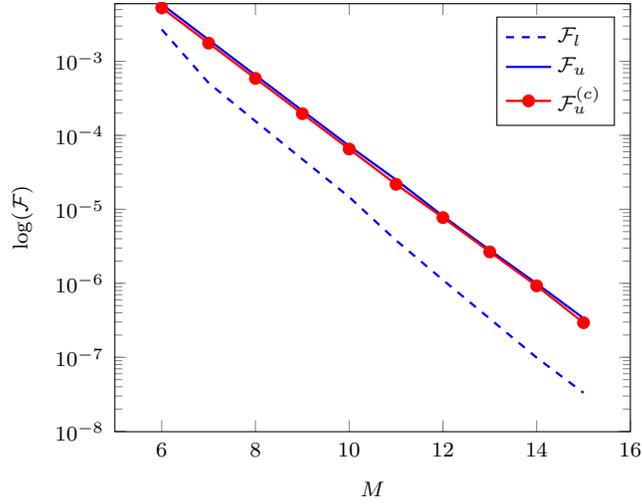
\begin{figure}[htb!]
\centering
\begin{tikzpicture}[scale = 1]
\begin{semilogyaxis}[
xlabel=$M$,ylabel=$\log(\FF)$, 
ymin = 0, ymax = 0.006,
xmin = 5,  
font=\scriptsize,
legend style={
	cells={anchor=west},
	legend pos=north east,
	font=\scriptsize,
}
]

\addplot[
mark=none,line width=.3mm,color=blue,dashed
]
table[
header=false,x index=0,y index=1,
]
{matlab/Extension_LP/tandem2dB_block_probability_M.csv};
\addlegendentry{$\FF_l$};

\addplot[
line width=.3mm,color=blue, mark=none
]
table[
header=false,x index=0,y index=2,
]
{matlab/Extension_LP/tandem2dB_block_probability_M.csv};
\addlegendentry{$\FF_u$};	

\addplot[
line width=.3mm,color=red, mark=*
]
table[
header=false,x index=0,y index=3,
]
{matlab/Extension_LP/tandem2dB_block_probability_M.csv};
\addlegendentry{$\FF^{(c)}_u$};
\end{semilogyaxis}
\end{tikzpicture}
\caption{Bounds on the rejecting probability for various $M$: $F(n) = \mathbf{1}(n_1 = M)$, $M = N$, $T = 4$, $\lambda/\mu_1 = 1/2$, $\lambda/\mu_1^* = 1/3$, $\lambda/\mu_2 = 1/3$.}
\label{fig:extension_lp_tandem2dB_block_probability_M}
\end{figure}


Next we consider the number of jobs in the system, \ie $F(n) = n_1 + n_2$. Again, we take $\lambda/\mu_1 = 1/2$, $\lambda/\mu_1^* = 1/3$ and $\lambda/\mu_2 = 1/3$. The bounds on $\FF$ as well as the comparison result are given in Figure~\ref{fig:extension_lp_tandem2dB_num_first_queue_M}. 

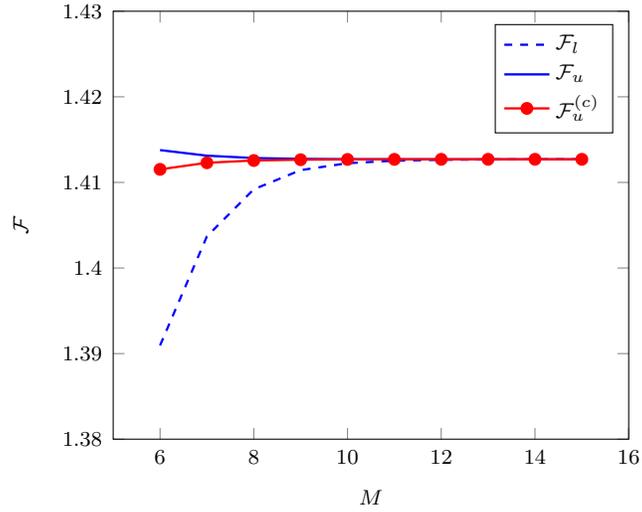
\begin{figure}[htb!]
\centering
\begin{tikzpicture}[scale = 1]
\begin{axis}[
xlabel=$M$,ylabel=$\FF$, 
ymin = 1.38, ymax = 1.43,
xmin = 5,  
font=\scriptsize,
legend style={
	cells={anchor=west},
	legend pos=north east,
	font=\scriptsize,
}
]

\addplot[
mark=none,line width=.3mm,color=blue, dashed
]
table[
header=false,x index=0,y index=1,
]
{matlab/Extension_LP/tandem2dB_num_first_queue_M.csv};
\addlegendentry{$\FF_l$};

\addplot[
line width=.3mm,color=blue, mark=none
]
table[
header=false,x index=0,y index=2,
]
{matlab/Extension_LP/tandem2dB_num_first_queue_M.csv};
\addlegendentry{$\FF_u$};	

\addplot[
line width=.3mm,color=red, mark=*
]
table[
header=false,x index=0,y index=3,
]
{matlab/Extension_LP/tandem2dB_num_first_queue_M.csv};
\addlegendentry{$\FF^{(c)}_u$};
\end{axis}
\end{tikzpicture}
\caption{Bounds on the number of jobs for various $M$: $F(n) = n_1 + n_2$, $M = N$, $T= 4$, $\lambda/\mu_1 = 1/2$, $\lambda/\mu_1^* = 1/3$, $\lambda/\mu_2 = 1/3$.}
\label{fig:extension_lp_tandem2dB_num_first_queue_M}
\end{figure}

At last we consider a different setting for the case $F(n)=n_1+n_2$. More precisely, we fix $M = N = 10$ and $T = 4$. Let $\lambda/\mu_1^* = 1/3$, $\lambda/\mu_2 = 1/2$ and consider various values for $\lambda/\mu_1$. Bounds on $\FF$ are given in Figure~\ref{fig:extension_lp_tandem2dB_num_first_queue_load}. 

%
%
%

\begin{figure}[htb!]
\centering
\begin{tikzpicture}[scale = 1]
\begin{axis}[
xlabel=$\lambda/\mu_2$,ylabel=$\FF$, 
ymin = 0, ymax = 6,  
font=\scriptsize,
legend style={
	cells={anchor=west},
	legend pos=south east,
	font=\scriptsize,
}
]

\addplot[
mark=none,line width=.3mm,color=blue,dashed, smooth
]
table[
header=false,x index=0,y index=1,
]
{matlab/Extension_LP/tandem2dB_num_first_queue_load.csv};
\addlegendentry{$\FF_l$};

\addplot[
line width=.3mm,color=blue, mark=none, smooth
]
table[
header=false,x index=0,y index=2,
]
{matlab/Extension_LP/tandem2dB_num_first_queue_load.csv};
\addlegendentry{$\FF_u$};	

\addplot[
line width=.3mm,color=red, mark=*, smooth
]
table[
header=false,x index=0,y index=3,
]
{matlab/Extension_LP/tandem2dB_num_first_queue_load.csv};
\addlegendentry{$\FF^{(c)}_u$};
\end{axis}
\end{tikzpicture}
\caption{Bounds on the number of jobs for various $\lambda/\mu$: $F(n) = n_1+n_2$, $M = N = 10$, $T = 4$, $\lambda/\mu_1^* = 1/2$, $\lambda/\mu_2 = 1/3$.}
\label{fig:extension_lp_tandem2dB_num_first_queue_load}
\end{figure}

\subsection{Three-node coupled queue}

Consider a discrete-time coupled queue model in the three-dimensional space, \ie $S = \left\{ 0,1,\dots \right\}^3$. We restrict our attention to the symmetric case, \ie the service rates at all the nodes are $\mu$ when they are not empty. For the first queue, when the other two queues are empty, the service rate changes to $\mu^*$. The transition probabilities of $R$ are 
\begin{align}
P(n,n+e_1) =&\ P(n,n+e_2) = P(n,n+e_3) = \lambda, \\
P(n,n-e_2) =&\ \mathbf{1}(n-e_2 \in S)\mu, \quad P(n,n-e_3) = \mathbf{1}(n-e_3 \in S)\mu, \\
P(n,n-e_1) =& 
\begin{cases}
\mathbf{1}(n-e_1 \in S) \mu^*, & \textrm{if } n_2 = n_3 = 0, \\
\mathbf{1}(n-e_1 \in S) \mu, & \textrm{otherwise},
\end{cases} \\ 
P(n,n) =&\ 1 - \sum_{i=1}^3\sum_{u\in\left\{ e_i,-e_{i} \right\}} P(n,n+u),
\end{align}
where $e_1 = (1,0,0)$, $e_2 = (0,1,0)$ and $e_3 = (0,0,1)$. Without loss of generality, assume that $3\lambda+2\mu+\max\left\{ \mu,\mu^* \right\} \le 1$. Moreover, assume that $\lambda/\mu < 1$ for stability. 

As perturbed random walk $\bar{R}$, use
\begin{align}
\bar{P}(n,n+e_1) =&\ \bar{P}(n,n+e_2) = \bar{P}(n,n+e_3) = \lambda, \\
\bar{P}(n,n-e_2) =&\ \mathbf{1}(n-e_2 \in S)\mu, \quad \bar{P}(n,n-e_3) = \mathbf{1}(n-e_3 \in S)\mu, \\
\bar{P}(n,n-e_1) =&\ \mathbf{1}(n-e_1 \in S) \mu, \\
P(n,n) =&\ 1 - \sum_{i=1}^3\sum_{u\in\left\{ e_i,-e_{i} \right\}} \bar{P}(n,n+u).
\end{align}
Thus,
\begin{align}
\bar{\pi}(n) = (1-\rho)^3 \cdot \rho^{n_1+n_2+n_3},
\end{align}
where $\rho = \lambda/\mu$. Let $\mu^* = 1.5\mu$ and first consider the probability that the system is empty. The upper and lower bounds, together with the comparison result are given below in Figure~\ref{fig:extension_lp_couple3d_empty_load_1.5}. 

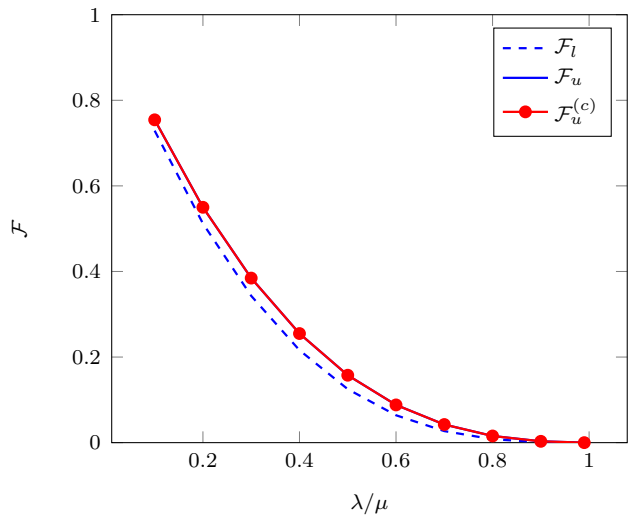
\begin{figure}[!ht]
\centering
\begin{tikzpicture}[scale=1]
\begin{axis}[
xlabel=$\lambda/\mu $,ylabel=$\FF$, 
ymin = 0, ymax = 1,
font=\scriptsize,
legend style={
	cells={anchor=west},
	legend pos=north east,
	font=\scriptsize,
}
]

\addplot[
mark=none,line width=.3mm,color=blue,dashed
]
table[
header=false,x index=0,y index=1,
]
{matlab/Extension_LP/couple3d_empty_load_1.5_new.csv};
\addlegendentry{$\FF_l$};
\addplot[
line width=.3mm,color=blue,
mark=none
]
table[
header=false,x index=0,y index=2,
]
{matlab/Extension_LP/couple3d_empty_load_1.5_new.csv};
\addlegendentry{$\FF_u$};	

\addplot[
line width=.3mm,color=red,
mark=*
]
table[
header=false,x index=0,y index=3,
]
{matlab/Extension_LP/couple3d_empty_load_1.5_new.csv};
\addlegendentry{$\FF^{(c)}_u$};
\end{axis} 
\end{tikzpicture}
\caption{Bounds on $\FF$ for various $\lambda/\mu$: $F(n)=\mathbf{1}(n=\mathbf{0})$, $\mu^*=1.5\mu$. }
\label{fig:extension_lp_couple3d_empty_load_1.5}
\end{figure}

From Figure~\ref{fig:extension_lp_couple3d_empty_load_1.5}, we see that the upper and lower bounds are very tight. Moreover, the comparison result is the same as the upper bound. 
Next, the performance measure $F(n) = n_1$ is considered. 

\begin{figure}[!ht]
\centering
\begin{tikzpicture}[scale=1]
\begin{axis}[
xlabel=$\lambda/\mu $,ylabel=$\FF$, 
ymin = 0, ymax = 10,
font=\scriptsize,
legend style={
	cells={anchor=west},
	legend pos=north west,
	font=\scriptsize,
}
]

\addplot[
mark=none,line width=.3mm,color=blue,dashed
]
table[
header=false,x index=0,y index=1,
]
{matlab/Extension_LP/couple3d_size1_load_1.5_new.csv};
\addlegendentry{$\FF_l$};
\addplot[
line width=.3mm,color=blue,
mark=none
]
table[
header=false,x index=0,y index=2,
]
{matlab/Extension_LP/couple3d_size1_load_1.5_new.csv};
\addlegendentry{$\FF_u$};	

\addplot[
line width=.3mm,color=red,
mark=*
]
table[
header=false,x index=0,y index=3,
]
{matlab/Extension_LP/couple3d_size1_load_1.5_new.csv};
\addlegendentry{$\FF^{(c)}_u$};

\end{axis}     
\end{tikzpicture}
\caption{Bounds on $\FF$ for various $\lambda/\mu$: $F(n)=n_1$, $\mu^*=1.5\mu$. }
\label{fig:extension_lp_couple3d_size1_load_1.5}
\end{figure}
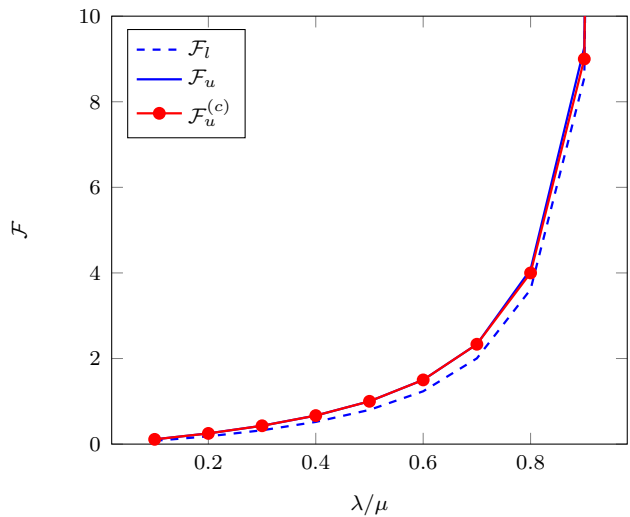 

From Figure~\ref{fig:extension_lp_couple3d_size1_load_1.5}, we see that the upper and lower bounds provide very good approximation to the performance for all the load between $0$ and $1$. In addition, the average number of jobs in the first queue increases as the load increases. 


\subsection{Three-node tandem system with boundary speed-up or slow-down}
\label{ssec:extension_lp_tandem3d}

Consider a tandem system containing three nodes. Every job arrives at node 1 and goes through all the nodes to complete its service. In the end, the job leaves the system through node 3. Let $\lambda$ be the arrival rate. Moreover, we assume that each server has the service rate $\mu$ when there are jobs in the queue. 
For server 1, the service rate changes to $\mu^*$, if both queue 2 and queue 3 become empty. 
Let $\mu^* = \eta\cdot\mu$. For the stability of the system, assume that $\lambda/\mu < 1$. 

\subsubsection*{The original random walk}

In this example, we have $S = \left\{ 0,1,\dots \right\}^3$ and the minimal partition $\minpar$ defined in Section~\ref{sec:introduction_model_description}. Notice that the tandem system described above is a continuous-time system. Therefore, we use the uniformization method to transform the continuous-time tandem system into a discrete-time $R$. Without loss of generality, assume that $\lambda + \max\{\mu,\mu^*\} + 2\mu \le 1$. Hence, we take the uniformization constant $1$. Then, the transition probabilities of the discrete-time $R$ are given below. 
\begin{align}
P(n,n+e_1) =&\ \lambda, \qquad P(n,n+d_2) = \mathbf{1}\left( n+d_2\in S \right)\mu, \\
P(n,n-e_3) =&\ \mathbf{1}\left( n-e_3\in S \right)\mu, \\
P(n,n+d_1) =& 
\begin{cases}
\mu^*, &\ \textrm{if } n_2=n_3=0, \\
\mu, &\ \textrm{otherwise}, 
\end{cases} \\
P(n,n) =&\ 1-\sum_{u\in  \left\{ e_1,d_1,d_2,d_3 \right\}} P(n,n+u), 
\end{align}
for all $n\in S$, with $e_1 = (1,0,0)$, $d_1 = (-1,1,0)$, $d_2 = (0,-1,1)$ and $e_3 = (0,0,1)$. 

\subsubsection*{The perturbed random walk}

For the perturbed random walks $\bar{R}$, we take
\begin{align}
\bar{P}(n,n+e_1) =&\ \lambda, \qquad \bar{P}(n,n+d_2) = \mathbf{1}\left( n+d_2\in S \right)\mu, \\
\bar{P}(n,n-e_3) =&\ \mathbf{1}\left( n-e_3\in S \right)\mu, \qquad \bar{P}(n,n+d_1) = \mathbf{1}\left( n+d_1\in S \right)\mu, \\
\bar{P}(n,n) = &\ 1-\sum_{u\in  \left\{ e_1,d_1,d_2,d_3 \right\}} \bar{P}(n,n+u). 
\end{align}
We know from~\cite{jackson1957networks} that the stationary distribution of $\bar{R}$ is,
\begin{align}
\bar{\pi}(n) = (1-\rho)^3 \cdot \rho^{n_1+n_2+n_3},
\end{align}
where $\rho = \lambda/\mu$. 

As performance measure, first we consider the probability that the system is empty, \ie $F(n) = \textbf{1}(n=\mathbf{0})$. In Figure~\ref{fig:extension_lp_tandem3d_empty_perturbation1}, we consider various values for $\eta$. In addition to the upper and lower bounds, we also include the comparison result given by Problem~\ref{pr:extension_lp_comparison_upper}, which provides an upper bound. 

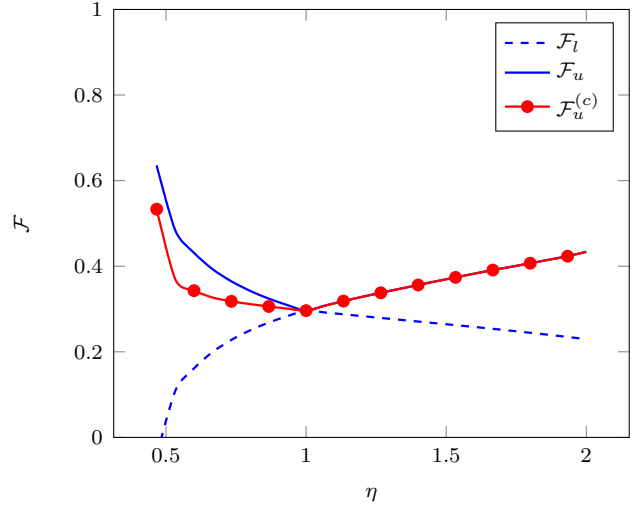
\begin{figure}[!htb]
\centering
\begin{tikzpicture}[scale=1]
\begin{axis}[
xlabel=$\eta$,ylabel=$\FF$, 
ymin = 0, ymax = 1,
font=\scriptsize,
legend style={
	cells={anchor=west},
	legend pos=north east,
	font=\scriptsize,
}
]
	
\addplot[
mark=none,line width=.3mm,color=blue,dashed, smooth
]
table[
header=false,x index=0,y index=1,
]
{matlab/Extension_LP/tandem3d_empty_perturbationerror1_new.csv};
\addlegendentry{$\FF_l$};

\addplot[
line width=.3mm,color=blue,
mark=none, smooth
]
table[
header=false,x index=0,y index=2,
]
{matlab/Extension_LP/tandem3d_empty_perturbationerror1_new.csv};
\addlegendentry{$\FF_u$};	

\addplot[
line width=.3mm,color=red,
mark=*, mark repeat = 2, smooth
]
table[
header=false,x index=0,y index=3,
]
{matlab/Extension_LP/tandem3d_empty_perturbationerror1_new.csv};
\addlegendentry{$\FF^{(c)}_u$};

\end{axis}     
\end{tikzpicture}
\caption{Bounds on $\FF$ for various $\eta$: $F = \mathbf{1}(n = \mathbf{0})$. }
\label{fig:extension_lp_tandem3d_empty_perturbation1}
\end{figure}  

The solid line is for the upper bounds $\FF_u$ and the dashed one is for lower bounds $\FF_l$. Moreover, the comparison upper bound is denoted by $\FF_u^{(c)}$. From Figure~\ref{fig:extension_lp_tandem3d_empty_perturbation1} we observe that the larger perturbation we make, the bigger the difference is between the upper and lower bounds. Moreover, we also see that the comparison result can give a better upper bound, when $\eta < 1$. 

Next, fix $\eta = 1.5$ and consider various values of $\lambda/\mu$. In Figure~\ref{fig:extension_lp_tandem3d_empty_load_1.5}, the upper and lower bounds as well as the comparison result are given on the probability that the system is empty. When $\lambda/\mu \ge 0.7$, the lower bound obtained by the optimization problem is negative. Hence, we use the trivial bound $0$. 

\begin{figure}[!htb]
\centering
\begin{tikzpicture}[scale=1]
\begin{axis}[
xlabel=$\lambda/\mu $,ylabel=$\FF$, 
ymin = 0, ymax = 1,
font=\scriptsize,
legend style={
	cells={anchor=west},
	legend pos=north east,
	font=\scriptsize,
}
]
	
\addplot[
mark=none,line width=.3mm,color=blue,dashed
]
table[
header=false,x index=0,y index=1,
]
{matlab/Extension_LP/tandem3d_empty_load_1.5_new.csv};
\addlegendentry{$\FF_l$};

\addplot[
line width=.3mm,color=blue,	
mark=none
]
table[
header=false,x index=0,y index=2,
]
{matlab/Extension_LP/tandem3d_empty_load_1.5_new.csv};
\addlegendentry{$\FF_u$};	

\addplot[
line width=.3mm,color=red,
mark=*
]
table[
header=false,x index=0,y index=3,
]
{matlab/Extension_LP/tandem3d_empty_load_1.5_new.csv};
\addlegendentry{$\FF^{(c)}_u$};
	
\end{axis}     
\end{tikzpicture}
\caption{Bounds on $\FF$ for various $\lambda/\mu$: $F(n)=\mathbf{1}(n=\mathbf{0})$, $\mu^* = 1.5\mu$. }
\label{fig:extension_lp_tandem3d_empty_load_1.5}
\end{figure}
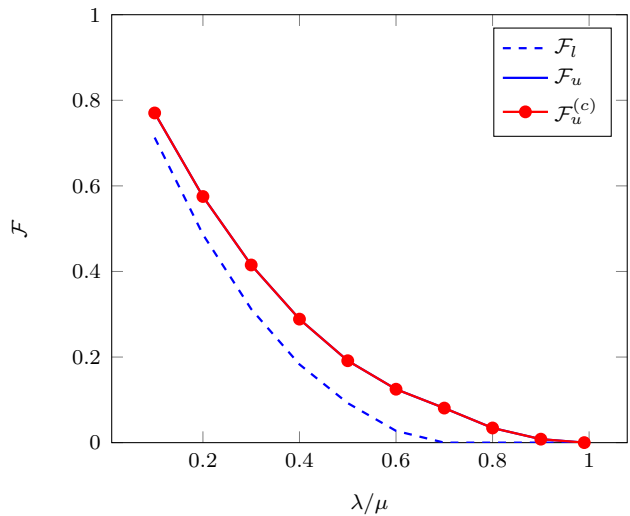

From Figure~\ref{fig:extension_lp_tandem3d_empty_load_1.5}, first we can notice that the upper and lower bounds are relatively tight. In addition, the comparison result is always the same as the upper bound, which is consistent with the result in Figure~\ref{fig:extension_lp_tandem3d_empty_perturbation1}, \ie the comparison result and the upper bound are the same for $\eta > 1$. Moreover, we see that as $\lambda/\mu$ increases, the probability that the system is empty decreases since the system becomes busier. Although at some point, the lower bound drops to $0$, the bounds given by our optimization problems are still reasonably good. 

Next, we consider a different performance measure, the average number of jobs in the first queue, \ie $F(n) = n_1$. Remark that the job in the server is also included for the number of jobs in the queue. 
Again, we fix $\eta = 1.5$ and consider various values of $\lambda/\mu$. The bounds and comparison result are given in Figure~\ref{fig:extension_lp_tandem3d_size1_load_1.5}. When the load is larger than $0.75$, the problems for both upper and lower bound are infeasible. Hence, the results for these cases are not included. 

\begin{figure}[!ht]
\centering
\begin{tikzpicture}[scale=1]
\begin{axis}[
xlabel=$\lambda/\mu $,ylabel=$f$, 
ymin = 0, ymax = 4,
font=\scriptsize,
legend style={
	cells={anchor=west},
	legend pos=north west,
	font=\scriptsize,
}
]

\addplot[ 
mark=none,line width=.3mm,color=blue,dashed
]
table[
header=false,x index=0,y index=1,
]
{matlab/Extension_LP/tandem3d_size1_load_1.5_new.csv};
\addlegendentry{$\FF_l$};
\addplot[
line width=.3mm,color=blue,
mark=none
]
table[
header=false,x index=0,y index=2,
]
{matlab/Extension_LP/tandem3d_size1_load_1.5_new.csv};
\addlegendentry{$\FF_u$};	

\addplot[
line width=.3mm,color=red,
mark=*
]
table[
header=false,x index=0,y index=3,
]
{matlab/Extension_LP/tandem3d_size1_load_1.5_new.csv};
\addlegendentry{$\FF^{(c)}_u$};

\end{axis}     
\end{tikzpicture}
\caption{Bounds on $\FF$ for various $\lambda/\mu$: $F(n)=n_1$, $\mu^*=1.5\mu$. }
\label{fig:extension_lp_tandem3d_size1_load_1.5}
\end{figure}
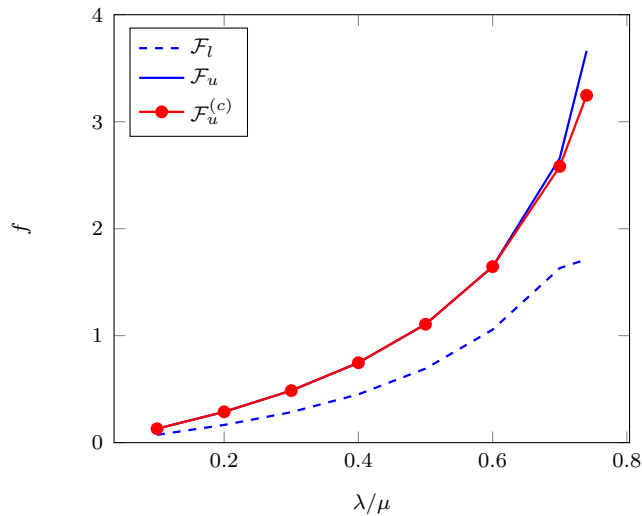


\section{Conclusions and discussion}
\label{sec:extension_lp_conclusion}

In this paper, we have considered random walks in $M$-dimensional positive orthant. Given a non-negative $C$-linear function, we have formulated optimization problems that provide upper and lower bounds on the stationary performance measure. Moreover, we have shown that these optimization problems can be reduced to linear programs with a finite number of variables and constraints. 

We have built up a numerical script in Python to implement the linear programs. In the paper we have used this script to obtain numerical bounds on stationary performance measures for various random walks. Through numerical experiments, we see that the linear programs for upper and lower bounds are not always feasible. In particular, once the load exceeds some threshold the problems often become infeasible and cannot return any bounds. The reason for this is still not known yet. One possible direction for future research is to use duality theory to find out exactly which constraints are violated. Then, we can understand more about how the linear programs work and then find a way to deal with the cases for heavy loads. Another interesting direction is to explore how to choose the objective function of Problem~\ref{pr:extension_lp_flow_v1} such that it improves the error bound. For instance, we may use a weighted sum as the objective function. It is also of interest to apply this numerical script to models where $M > 3$.

\bibliographystyle{IEEEtran}
\bibliography{references}

\appendix

\section{Proof of Theorem~\ref{thm:Markov_reward_approach_result}}
\label{sec:introduction_proof_MRA}

In this proof, we use the following shorthand notation for any $A:S \to [0,\infty)$, $B:S \to [0,\infty)$ and $C: S \times S \to [0,1]$,
\begin{align}
A \cdot B = \sum_{n\in S} A(n)B(n), \qquad A\cdot C(n) = C\cdot A(n) = \sum_{n^\prime \in S} A(n^\prime)C(n,n^\prime). 
\end{align}
From Equation~\eqref{eq:cumulative_reward}, we have
\begin{align}
\nonumber
\bar{F}^{t} - F^{t} =&\ (\bar{F} - F) + (\bar{P} \bar{F}^{t-1} - P F^{t-1}) \\
=&\ (\bar{F} - F) + (\bar{P} - P) F^{t-1} + \bar{P}(\bar{F}^{t-1} - F^{t-1}). 
\end{align}
Then, we use the relation above again for $\bar{F}^{t-1} - F^{t-1}$ in the RHS. Hence,
\begin{align}
\nonumber
\bar{F}^{t} - F^{t} =&\ (\bar{F} - F) + (\bar{P} - P) F^{t-1} \\
& \qquad\qquad + \bar{P} \left[ (\bar{F} - F) + (\bar{P} - P) F^{t-2} + \bar{P}(\bar{F}^{t-2} - F^{t-2}) \right] \\
=&\ \dots = \sum_{k=0}^{t-1} \left[ \bar{P}^k(\bar{F}-F) + \bar{P}^k\left( \bar{P}-P \right)F^{t-k-1} \right] + \bar{P}^{t+1}(\bar{F}^0 - F^0). 
\end{align}
The last item vanishes since $\bar{F}^0(n) = F^0(n) = 0$, for $n\in S$. Then, we have
\begin{align}
\label{eq:appendix_error_without_abs}
\bar{\pi}\cdot(\bar{F}^t - F^t) = \sum_{k=0}^{t-1} \cdot \bar{\pi}\left[ \bar{P}^k(\bar{F}-F) + \bar{P}^k\left( \bar{P}-P \right)F^{t-k-1} \right].
\end{align}
Since $\bar{\pi}$ is the stationary distribution of $\bar{R}$, $\bar{\pi} \bar{P}^k = \bar{\pi}$ for any $k \ge 0$. Therefore, taking the absolute value on both sides of~\eqref{eq:appendix_error_without_abs} we get
\begin{align}
\label{eq:appendix_error_abs}
\left| \bar{\pi}\cdot(\bar{F}^t - F^t) \right| = \left| \sum_{k=0}^{t-1}\sum_{n\in S} \bar{\pi}(n) \left\{ (\bar{F}(n)-F(n)) + \left[\left( \bar{P}-P \right)F^{t-k-1}\right](n) \right\} \right|
\end{align}
Moreover, by summing over $n^\prime \neq n$ and $n^\prime = n$ separately in the RHS of~\eqref{eq:appendix_error_abs}, we have
\begin{align}
\nonumber
\left[\left( \bar{P}-P \right)F^{t-k-1}\right](n) =& \sum_{n^\prime \neq n} \left[ \bar{P}(n,n^\prime)-P(n,n\prime) \right] F^{t-k-1}(n^\prime) \\
& - \sum_{n^\prime\neq n} \left[ \bar{P}(n,n^\prime)-P(n,n^\prime) \right]F^{t-k-1}(n) \\
\label{eq:difference_cumu_reward}
=& \sum_{n^\prime\in S} \left[ \bar{P}(n,n^\prime)-P(n,n^\prime) \right]D^{t-k-1}(n,n^\prime).
\end{align}
Therefore, by~\eqref{eq:difference_cumu_reward} and~\eqref{eq:error_bound_condition} we have
\begin{align}
\nonumber
& \left| \bar{\pi}\cdot(\bar{F}^t - F^t) \right| \\
\nonumber
\le& \sum_{k=0}^{t-1}\sum_{n\in S}\bar{\pi}(n) \left| \bar{F}(n)-F(n) + \sum_{n^\prime\in S} \left[ \bar{P}(n,n^\prime)-P(n,n^\prime) \right]D^{t-k-1}(n,n^\prime) \right| \\
\nonumber
\le & \sum_{n\in S}\bar{\pi}(n) tG(n) \\
=&\ t\bar{\pi}\cdot G.
\end{align}
For any $n\in S$, we know that
\begin{eqnarray*}
\FF = \lim_{t\to \infty}\frac{F^t(n)}{t}, \qquad \bar{\FF} = \lim_{t\to \infty}\frac{\bar{F}(n)}{t}.
\end{eqnarray*}
Remark that the equation above also holds when $R$ or $\tilde{R}$ has a single absorbing communicating class which is a subset of $S$, while the other states are transient. Therefore,
\begin{align}
\lim_{t\to \infty} \frac{\left| \bar{\pi}\cdot(\bar{F}^t - F^t) \right|}{t} \le \bar{\pi}\cdot G \quad \Longleftrightarrow \quad \left| \bar{\FF} - \FF \right| \le \bar{\pi}\cdot G.
\end{align}

\end{document}